\theoremstyle{definition}
\newtheorem{definition}{Definition}
\numberwithin{equation}{section}
\numberwithin{definition}{section}
\numberwithin{definitions}{section}
\numberwithin{thm}{section}
\numberwithin{lem}{section}
\numberwithin{remark}{section}
\newcommand{\dlambda}{\ensuremath{\text{d}\lambda}}
\newcommand{\dM}{\text{d}M}
\newcommand{\domega}{\ensuremath{\text{d}\omega}}
\newcommand{\dS}{\text{d}S}
\newcommand{\dW}{\text{d}W}
\newcommand{\ud}{\text{d}}
\newcommand{\IN}{{\mathbb{N}}}
\newcommand{\IR}{{\mathbb{R}}}
\newcommand{\B}{\ensuremath{\mathcal{B}}}
\newcommand{\id}{\text{id}}
\newcommand{\indep}{\rotatebox[origin=c]{90}{$\models$}}
\newcommand{\ind}{\mathbf{1}}
\newcommand{\E}{\ensuremath{\mathcal{E}}}
\newcommand{\F}{\ensuremath{\mathcal{F}}}
\newcommand{\G}{\ensuremath{\mathcal{G}}}
\newcommand{\Hh}{\ensuremath{\mathcal{H}}}
\newcommand{\FF}{\mathbb{F}}
\newcommand{\GG}{\mathbb{G}}
\newcommand{\HH}{\mathbb{H}}
\newcommand{\PP}{\mathbb{P}}
\newcommand{\EE}{\mathbb{E}}
\DeclareMathOperator*{\esssup}{ess\,sup}
\newcommand{\R}{\mathbb{R}}
\newcommand{\A}{\ensuremath{\mathcal{A}}}
\newtheorem{assumption}{Assumption}
\newtheorem*{summary*}{Summary}
\newtheorem{theorem}{Theorem}
\newtheorem{lemma}[theorem]{Lemma}
\newtheorem{proposition}[theorem]{Proposition}
\newtheorem*{remarks*}{Remarks}
\newtheorem*{remark*}{Remark}
\numberwithin{theorem}{section}
\numberwithin{remarks}{section}
\newcommand{\Ff}{\mathcal{F}}
\newcommand{\N}{\mathcal{N}}
\definecolor{mygray}{gray}{0.6}
\definecolor{mycoral}{rgb}{0.25, 0.0, 1.0}
\title{Utility maximisation and change of variable formulas for time-changed dynamics}
\author{Giulia Di Nunno\thanks{Department of Mathematics,
University of Oslo, P.O. Box 1053 Blindern, N-0316 Oslo, and Department of Business and Management Science, NHH, Helleveien 30, N-5045 Bergen, Norway.
Email: giulian@math.uio.no}  \and
Hannes Haferkorn\thanks{Commerzbank AG. Email: hanneshagen.haferkorn@commerzbank.com}
\and
Asma Khedher\thanks{Korteweg-de Vries Institute for Mathematics, P.O. Box 94248, 1090 GE Amsterdam, The Netherlands. Email: a.khedher@uva.nl}
\and
Mich\`ele Vanmaele\thanks{Department of Applied Mathematics, Computer Science and Statistics, Ghent University, Krijgslaan 281-S9, Gent, 9000, Belgium.
Email: Michele.Vanmaele@UGent.be}
}
\date{July 3, 2024}
\begin{document}
\maketitle

\begin{abstract}
	In this paper we derive novel change of variable formulas for stochastic integrals w.r.t.\ a time-changed Brownian motion where we assume that the time-change is a general increasing stochastic  process with finitely many jumps in a bounded set of the positive half-line and is independent of the Brownian motion. 
	As an application we consider the problem of maximising the expected utility of the terminal wealth in a semimartingale setting, where the semimartingale is written in terms of a time-changed Brownian motion and a finite variation process. To solve this problem, we use an initial enlargement of filtration and our change of variable formulas to shift the problem to a maximisation problem under the enlarged filtration for models driven by a Brownian motion and a finite variation process. 
The latter problem can be solved by using martingale properties. 
Then applying again a change of variable formula, we derive the optimal strategy for the original problem for a power utility and for a logarithmic utility.

\end{abstract}


\section{Introduction}
{\it Time-change} is a modelling technique that allows to change the speed at which a process runs through its paths. See, e.g., \cite{jacod2006calcul, el1977theorie} for an overview. Time-changed semimartingales are well studied in the literature in the cases when the time-change is absolutely continuous w.r.t.~the Lebesgue-measure or when it is a subordinator (i.e., an increasing L\'evy process). See, e.g., \cite{kallsen2002time, sauri2017class, di2014bsdes, diNunno, kallsen2010utility}. 
In this paper, we consider a {\it time-changed Brownian motion}  $(M_t)_{0\leq t\leq T}$, $M_t:=W_{\Lambda_t}$, where we assume that the time-change $(\Lambda_t)_{0\leq t\leq T}$ is a general increasing stochastic process with finitely many jumps in a bounded set of the positive half-line and it is independent of the Brownian motion $(W_t)_{0\leq t \leq T}$.
In this way, we do allow $(\Lambda_t)_{0\leq t \leq T}$ to jump, though being outside of the framework of L\'evy processes.

\par \medskip
Our motivation for looking at such time-changed noises lies in the fact that they reflect well some of the stylised facts observed in real financial data and yet they are quite statistically tractable models providing also good calculus potential in the stochastic analysis.
Indeed, the time-change offers a very natural way to introduce stochastic volatility in the model of  risky asset prices: The market time' $\Lambda_t$ is -- in contrast to the physical time $t$ -- linked to the number of trades and is as such reflecting the flow of news on the market. The more trades happen at a fixed physical time interval $(t_0,t_0+\varepsilon)$, the faster the market time evolves (relative to physical time), i.e., the steeper the function $t\mapsto\Lambda_t$ is on $t\in(t_0,t_0+\varepsilon)$. 
Jumps of the time-change $(\Lambda_t)_{0\leq t \leq T}$ are to be interpreted as an explosion of the number of trades, which typically happens when some critical news is coming in. See, e.g., \cite{BNS-book, S-book, GMY-2001}.

\par \medskip
Our \textit{first main contribution} is the derivation  of novel change of variable formulas for stochastic integrals w.r.t.\ a time-changed Brownian motion. We start from $(M_t)_{0\leq t\leq T}$ and a filtration $\FF= (\Ff_t)_{0\leq t\leq T}$ generated by $(M_t)_{0\leq t\leq T}$ and $(\Lambda_t)_{0\leq t\leq T}$ under which  $(W_t)_{0\leq t\leq T}$ is not necessarily a Brownian motion. We introduce the 
 \textit{enlarged filtration} $\HH=(\Hh_t)_{0\leq t\leq T}$ given by
\begin{align*}
\Hh_t:=\F^W_t\vee\F^\Lambda_T,
\end{align*}
i.e., $\Hh_0$ contains already all the information about the entire future of the time-change $(\Lambda_t)_{0\leq t\leq T}$ (see, e.g., \cite{jeulin2006semi, jacod2006calcul} for more about enlargement of filtrations).
The advantage from enlarging the filtration is that $(W_t)_{0\leq t\leq T}$ is a Brownian motion as we work under the filtration $\HH$. This allows us to prove change of variable formulas to turn the stochastic integral w.r.t.~the time-changed Brownian motion $W_{\Lambda_\cdot}$ into an integral w.r.t.~the Brownian motion $W$
and conversely, i.e., turn the stochastic integral w.r.t.~the Brownian motion $W$ into a stochastic integral w.r.t.~the time-changed Brownian motion $W_{\Lambda_\cdot}$. Our results are substantially different from the change of variable formulas available in the literature, which deal with stochastic integrals w.r.t.\ time-changed \emph{$\Lambda$}-adapted semimartingales, i.e., semimartingales  which are constants on all the stochastic time intervals $[\Lambda_{t-},\Lambda_{t}]$, $t\in[0,T]$, see \cite{kobayashi2011stochastic} and \cite{jacod2006calcul}. Indeed, our formula deals with the time-changed Brownian motion as integrator, which is not necessarily $\Lambda$-adapted.

\par \medskip

Our \textit{second main contribution} is to solve a utility maximisation problem from terminal wealth, where the dynamics are driven by the time-changed Brownian motion. We consider the filtration $\FF$ and  a controlled stochastic dynamics $(V_t^\nu)_{0\leq t \leq T}$ driven by a semimartingale $S_t=M_t +A_t$, $0\leq t\leq T$, where $M_t= W_{\Lambda_t}$ and $(A_t)_{0\leq t\leq T}$ is a c\`adl\`ag process of finite-variation. Here we impose on $\Lambda$ to be {\it strictly} increasing with finitely many jumps in a bounded set of the positive half-line.
 In particular, we are interested in solving the utility maximisation problem with objective function 
\begin{align*}
 J(\nu):=\mathbb{E}[U(V^{\nu}_T)|\F_t]\,,
\end{align*}
for a utility function $U$ over the set of admissible $\FF$-adapted controls $\nu$, for every $t\in[0,T]$. 
Later on, we specify the type of utility function to be a \textit{power} or a \textit{logarithmic utility}.
The maximisation of expected utility from terminal wealth is a classical problem in mathematical finance (we refer, e.g.,~to \cite{karatzas1998methods} for an overview). Different approaches are used in the literature to solve such a problem relying on the theory of partial differential equations, duality characterisations of portfolios, or the theory of quadratic backward stochastic differential equations, see, e.g., \cite{merton1975optimum, benth2003merton, cox1991variational, karatzas1991martingale, he1991consumption, kramkov1999asymptotic, hu2005utility, morlais2010new}, where the maximisation problem is considered in continuous and jump settings. The case where the price process is modelled by a time-changed L\'evy process with the time-change being absolutely continuous w.r.t.~the Lebesgue-measure is considered in \cite{di2014bsdes, diNunno, kallsen2010utility}. 
To the best of our knowledge, we are the first to tackle the optimisation problem above in the setting of time-changed Brownian motion for general time-changes, which are increasing and allowed to jump at finitely many times. 
This setting entails some challenges which we approach by interplaying with different information flows and exploiting the change of variable formulas we have introduced.

\par \medskip
Indeed we study, at first, the optimisation problem in the framework with the \textit{enlarged filtration} $\HH=(\Hh_t)_{t\geq 0}$, which we time-change. 
The objective is to maximise the performance
\begin{align*}
 J_\HH(\nu):=\mathbb{E}[U(V^{\nu}_T)|\Hh_{\Lambda_t}]\,,
\end{align*}
over all $\mathbb{F}$-adapted and admissible controls $\nu$. We prove that an optimiser in this setup yields an upper bound for the solution to the original problem (i.e., when we condition on $\F_t$ instead of $\Hh_{\Lambda_t}$), that follows from
\[
\esssup_{\nu\in\A_{t;\FF}}J_\HH(\nu) \leq \esssup_{\tilde{\nu}\in\tilde\A_{\Lambda_t;\HH}}J_\HH(\tilde{\nu}),
\]
where $\A_{t;\FF}$ is the set admissible $\mathbb{F}$-adapted strategies and $\tilde\A_{\Lambda_t;\HH}$ is the set admissible $\mathbb{H}$-adapted strategies.

Then using the change of variable formulas we introduce in this work and adapting the approach by \cite{kallsen2010utility} and  \cite{GOLL2000}, we obtain explicit expressions for the optimal strategy and the optimal value function for the {\it power} and the {\it logarithmic utility} functions under some conditions on the finite-variation part of our considered model. The conditions imposed on the model permit to preserve some properties of the optimal strategy, in particular some measurability properties, that might otherwise be lost after the application of the change of variable formulas. So we obtain
\[
J_\HH(\hat{\nu})=\esssup_{\nu\in\A_{t;\FF}}J_\HH(\nu)= \esssup_{\tilde{\nu}\in\tilde\A_{\Lambda_t;\HH}}J_\HH(\tilde{\nu}),  \;\;\mbox{and\ }\; J(\hat{\nu}) =\mathbb{E} \left[J_\HH(\hat{\nu})\mid \mathcal{F}_t\right]\,.
\]
\medskip
This article is organised as follows. In Section \ref{S2} we review the definition and some classical results on time-change and change of variable formulas for stochastic integration. We introduce our framework and show some results on enlargement of filtrations and on the generalised-inverse of the time-change. Section \ref{S4} contains our main results on the change of variable formulas for stochastic integrals w.r.t.\ the time-changed Brownian motion when the time-change is a general increasing process allowed to jump at finitely many times. In Section \ref{S5} we use these formulas and exploit different information flows to solve first the problem under the enlarged filtration and to draw conclusions for our original problem. 
In the Appendix we collect some technical results and some of the proofs of Section \ref{S2}.

\section{Framework, enlargement of filtration and the generalised inverse of the time-change}\label{S2}
Fix $T$, $R \in \mathbb{R}_+$. Let $(\Omega,\F,\mathbb{P})$ be a probability space.  
Let $\mathcal{S},\mathcal{T} : \Omega \rightarrow [0,\infty]$. By abuse of notation, we also denote by $[\mathcal{S},\mathcal{T}]$ the stochastic interval, i.e., $[\mathcal{S},\mathcal{T}] = \{(\omega, t) \in \Omega \times [0,\infty];\, \mathcal{S}(\omega) \leq t\leq \mathcal{T}(\omega)\}$. 
Let $\N$ be the set of $\mathbb{P}$-null events. Given a stochastic process $X=(X_r)_{0\leq r \leq R}$, we denote by
\begin{align}\label{natural-filtration}
\F_r^X = \sigma(X_u, \, u\leq r)\vee \N\,, \qquad 0\leq r \leq R\,,
\end{align}
 the augmented sigma-algebra generated by $X$ up to time $r$ and we set $\mathbb{F}^X = (\F_r^X)_{0\leq r\leq R}$. 

In this section, 
 we set the framework and present some preliminary results on enlargement of filtrations and on the generalised-inverse of the time-change. 
Here we consider the product $(\Omega,\F,\PP)$ of two complete probability spaces $(\Omega_\Lambda,\F_\Lambda, \PP_\Lambda)$ and $(\Omega_W,\F_W,\PP_W)$. Namely, 
\begin{align}\label{product-probability}
 \Omega&=\Omega_\Lambda\times\Omega_W\,,\nonumber\\
 \F&=(\F_\Lambda\otimes\F_W)\vee\N\,, \\
 \PP&=\PP_\Lambda\otimes \PP_W\,, \nonumber
\end{align}
We consider a Brownian motion $W$ and an increasing right-continuous stochastic process $\Lambda$, with $\Lambda_0=0$, as the following measurable mappings on $\Omega$:
\begin{equation}\label{W-Lambda}
\begin{aligned}
\Lambda:& \;[0,T]\times\Omega\rightarrow[0,R]\,,  & (t,\omega_\Lambda,\omega_W) &\longmapsto \Lambda_t(\omega_\Lambda)\,, \\
W:& \:[0,R]\times\Omega\rightarrow\IR\,,  & (r,\omega_\Lambda,\omega_W) &\longmapsto W_r(\omega_W),
\end{aligned}
 \end{equation}
namely, $\Lambda$ is $\B([0,T])\otimes\F_\Lambda\otimes\{\emptyset,\Omega_W\}-\B([0,R])$-measurable and $W$ is $\B([0,R])\otimes\{\emptyset,\Omega_\Lambda\}\otimes\F_W-\B(\IR)$-measurable. Observe that $W$ and $\Lambda$ are independent.

\medskip
Let $\mathbb{F}^\Lambda$ and $\mathbb{F}^W$ be the filtrations generated by $\Lambda$ and $W$, respectively, see \eqref{natural-filtration}.
 We assume that
\begin{align}\label{Assumption 1}
\F^\Lambda_T=(\F_\Lambda\otimes\{\emptyset,\Omega_W\})\vee\N\,, \nonumber \\
\F^W_{R}=(\{\emptyset,\Omega_\Lambda\}\otimes\F_W)\vee\N\,.
\end{align}

\begin{proposition}\label{AufteilungSigmaAlgF}
In the framework \eqref{product-probability}-\eqref{Assumption 1}, we have that:
\begin{itemize}
\item[i)]
$\F=\F^\Lambda_T\vee\F^W_{R}$,
\item[ii)]
$\F^\Lambda_T$ is independent of $\F^W_{R}$.
\end{itemize}
\end{proposition}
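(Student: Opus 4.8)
The plan is to treat the two items separately: item i) reduces to the generating-rectangle property of product $\sigma$-algebras, and item ii) reduces to a one-line computation on rectangles followed by a standard argument that completing by null sets preserves independence.

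\textbf{Item i).} One inclusion is immediate: by \eqref{Assumption 1}, both $\F^\Lambda_T$ and $\F^W_R$ are contained in $\F$, since $\F_\Lambda\otimes\{\emptyset,\Omega_W\}$ and $\{\emptyset,\Omega_\Lambda\}\otimes\F_W$ are sub-$\sigma$-algebras of $\F_\Lambda\otimes\F_W$ and $\N\subseteq\F$; hence $\F^\Lambda_T\vee\F^W_R\subseteq\F$. For the converse I would use that $\F_\Lambda\otimes\F_W$ is, by definition, generated by the measurable rectangles $A\times B$ with $A\in\F_\Lambda$, $B\in\F_W$, together with the identity $A\times B=(A\times\Omega_W)\cap(\Omega_\Lambda\times B)$, where $A\times\Omega_W\in\F^\Lambda_T$ and $\Omega_\Lambda\times B\in\F^W_R$. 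Thus $\F_\Lambda\otimes\F_W\subseteq\F^\Lambda_T\vee\F^W_R$, and since also $\N\subseteq\F^\Lambda_T$, we conclude $\F=(\F_\Lambda\otimes\F_W)\vee\N\subseteq\F^\Lambda_T\vee\F^W_R$.

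\textbf{Item ii).} First I would establish independence of the ``cores'' $\G_\Lambda:=\F_\Lambda\otimes\{\emptyset,\Omega_W\}=\{A\times\Omega_W:A\in\F_\Lambda\}$ and $\G_W:=\{\emptyset,\Omega_\Lambda\}\otimes\F_W=\{\Omega_\Lambda\times B:B\in\F_W\}$, which are themselves $\sigma$-algebras. Since $\PP=\PP_\Lambda\otimes\PP_W$,
\[
\PP\big((A\times\Omega_W)\cap(\Omega_\Lambda\times B)\big)=\PP_\Lambda(A)\,\PP_W(B)=\PP(A\times\Omega_W)\,\PP(\Omega_\Lambda\times B)
\]
for all $A\in\F_\Lambda$, $B\in\F_W$, and this already gives independence of $\G_\Lambda$ and $\G_W$ (no monotone-class / $\pi$--$\lambda$ step is needed, as the relation holds for all members of the two $\sigma$-algebras, not merely on a generating $\pi$-system).

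It remains to pass from $\G_\Lambda,\G_W$ to the completed versions $\F^\Lambda_T=\G_\Lambda\vee\N$ and $\F^W_R=\G_W\vee\N$. Here I would use the standard representation $\G\vee\N=\{A\subseteq\Omega:A\triangle B\in\N\text{ for some }B\in\G\}$, valid because $\N$ consists of all $\PP$-negligible sets (closed under subsets and countable unions). Given $A_1\in\F^\Lambda_T$ and $A_2\in\F^W_R$, choose $B_1\in\G_\Lambda$ and $B_2\in\G_W$ with $A_i\triangle B_i\in\N$ for $i=1,2$; then $(A_1\cap A_2)\triangle(B_1\cap B_2)\subseteq(A_1\triangle B_1)\cup(A_2\triangle B_2)\in\N$, so
\[
\PP(A_1\cap A_2)=\PP(B_1\cap B_2)=\PP(B_1)\,\PP(B_2)=\PP(A_1)\,\PP(A_2),
\]
which is the desired independence. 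The only genuinely delicate point of the whole argument is this last step, i.e.\ checking that augmenting the two independent $\sigma$-algebras by the $\PP$-null sets does not destroy independence; but with the symmetric-difference description of $\G\vee\N$ in hand it is routine.
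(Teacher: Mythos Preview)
Your argument is correct and, for item i), essentially identical to the paper's: both reduce to the rectangle identity $A\times B=(A\times\Omega_W)\cap(\Omega_\Lambda\times B)$ to show that $\F_\Lambda\otimes\F_W\subseteq\F^\Lambda_T\vee\F^W_R$, with the reverse inclusion immediate.

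For item ii) the core computation is the same --- independence of $\{A\times\Omega_W\}$ and $\{\Omega_\Lambda\times B\}$ follows from $\PP=\PP_\Lambda\otimes\PP_W$ --- but the passage to the completed $\sigma$-algebras is handled differently. The paper observes that $\{A_\Lambda\times\Omega_W:A_\Lambda\in\F_\Lambda\}\cup\N$ and $\{\Omega_\Lambda\times B_W:B_W\in\F_W\}\cup\N$ are $\pi$-systems, checks pairwise independence on these generators (trivial when one set lies in $\N$), and then invokes the standard result that independence of $\pi$-systems lifts to the generated $\sigma$-algebras. You instead first establish independence of the uncompleted $\sigma$-algebras $\G_\Lambda,\G_W$ (where no $\pi$--$\lambda$ step is needed since these are already $\sigma$-algebras), and then separately argue via the symmetric-difference description of $\G\vee\N$ that adjoining null sets preserves independence. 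Both routes are standard; yours is a bit more self-contained, while the paper's absorbs the completion issue into a single $\pi$-system step.
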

Observe that $\Lambda$ is an $\FF^\Lambda$-{\it time-change}. In the sequel we consider {\it a time-changed Brownian motion} $(M_t)_{0\leq t\leq T}$, i.e.,
\begin{align}\label{time-changed-bm}
M_t := W_{\Lambda_t}\,, \qquad t \in [0,T]\, .
\end{align}
%
Time-changed Brownian motions are widely used in finance to model log stock returns and entail a big class of processes that go beyond Brownian motion both within continous models and including jumps, these account for both time clustering and stochastic volatility.
See, e.g. \cite{Barndorff-Nielsen2002}, and the comprehensive books \cite{BNS-book, Cont-Tankov, S-book}. 
For example, when the time-change is a L\'evy subordinator, one obtains the well known variance gamma (VG) model and the normal inverse Gaussian (NIG) model (see, e.g.,~\cite[Chapter 6]{sato1999levy}).
Also another area of use of time-changed models is in modelling turbulence. See, e.g., \cite{Schmiegel}. 

We stress that, using time-changed noises, one can have both Markovian and non-Markovian structures, which give a large flexibility from the modelling point of view. For example, subordinating a Brownian motion provides Markovianity, while using an absolutely continuous type of time-change provides, in general, a non-Markovian process.
Given the generality of the time-change we consider in this work, we allow for large flexibility and we work exploiting different information flows, under which we consider different properties of the processes.

\par \medskip

Indeed, we specify the $\mathbb{P}$-augmented filtration $\FF= (\F_t)_{t\in[0,T]}$ in connection with the time-changed Brownian motion $M$, with $M_t(\omega)= M_t(\omega_\Lambda, \omega_W) := W_{\Lambda_t(\omega_\Lambda)}(\omega_W)$, as
  \begin{align}\label{filtration-M-lambda}
   \F_t&:=\bigcap_{s>t}\left(\F^M_s\vee \F^\Lambda_s\right)\,, \, t \in [0,T)\,,\nonumber\\
    \F_T &= \F^M_T \vee \F^\Lambda_T\,.
  \end{align}
Note that  in our setting  the filtration $\mathbb{F}^M$ does not coincide with $\FF$ as $W$ is not necessarily $\Lambda$-adapted and the results from \cite[Theorem 10.17]{jacod2006calcul} do not hold as illustrated in \cite[Example 2.5]{kobayashi2011stochastic}.
  
\medskip
From now on  we endow the probability space $(\Omega,\F, \PP)$ with the filtration $(\F_t)_{0\leq t \leq T}$. This is possible, since the following observation holds.
\begin{proposition}\label{endowment}
Let $\F$, $\mathbb{F}$ be as in \eqref{product-probability}, \eqref{filtration-M-lambda}, respectively. Then $\F_t\subseteq\F$, $0\leq t \leq T\,.$
 \end{proposition}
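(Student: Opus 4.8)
The plan is to show that each generating set of $\F_t$ is contained in $\F$, and then invoke that $\F$ is a $\sigma$-algebra to conclude. Recall from \eqref{product-probability} that $\F = (\F_\Lambda \otimes \F_W) \vee \N$, and from \eqref{Assumption 1} that $\F^\Lambda_T = (\F_\Lambda \otimes \{\emptyset,\Omega_W\}) \vee \N$ and $\F^W_R = (\{\emptyset,\Omega_\Lambda\} \otimes \F_W) \vee \N$. In particular $\F^\Lambda_T \subseteq \F$ and $\F^W_R \subseteq \F$ trivially, since $\F_\Lambda \otimes \{\emptyset,\Omega_W\} \subseteq \F_\Lambda \otimes \F_W$ and likewise for $W$, and $\N \subseteq \F$ because $(\Omega,\F,\PP)$ is (by construction, being a completed product) complete.

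First I would handle the terminal case $t = T$. By definition \eqref{filtration-M-lambda}, $\F_T = \F^M_T \vee \F^\Lambda_T$. Since $M_t = W_{\Lambda_t}$ is built from $W$ and $\Lambda$, each $M_t$ is $\F^W_R \vee \F^\Lambda_T$-measurable (indeed $\Lambda_t$ takes values in $[0,R]$ by \eqref{W-Lambda}, so $W_{\Lambda_t}$ is a measurable function of the pair $(\Lambda,W)$); hence $\F^M_T \subseteq \F^W_R \vee \F^\Lambda_T \subseteq \F$, using the two containments noted above together with the fact that $\F$ is a $\sigma$-algebra containing both $\F^W_R$ and $\F^\Lambda_T$. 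Combining, $\F_T = \F^M_T \vee \F^\Lambda_T \subseteq \F$.

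For $t \in [0,T)$ the argument is even simpler: $\F_t = \bigcap_{s>t}(\F^M_s \vee \F^\Lambda_s)$ is an intersection, so it suffices to bound one term, e.g. $\F_t \subseteq \F^M_T \vee \F^\Lambda_T = \F_T \subseteq \F$, using monotonicity of $\F^M$ and $\F^\Lambda$ in the index. This disposes of all $t \in [0,T]$.

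The only point requiring a little care — and the one I would flag as the \emph{main obstacle}, modest as it is — is the measurability of the time-changed process $M_t = W_{\Lambda_t}$ with respect to $\F^W_R \vee \F^\Lambda_T$. One shows this by noting that the map $(r,\omega) \mapsto W_r(\omega_W)$ is $\B([0,R]) \otimes \F^W_R$-measurable (right-continuity of $W$ gives joint measurability) and $\omega \mapsto (\Lambda_t(\omega_\Lambda), \omega)$ is $\F^\Lambda_t \otimes$-to-$\B([0,R]) \otimes (\F^W_R \vee \F^\Lambda_T)$ measurable, so their composition $\omega \mapsto W_{\Lambda_t(\omega)}(\omega)$ is $(\F^W_R \vee \F^\Lambda_T)$-measurable; as this holds for every $t \le T$, we get $\F^M_T \subseteq \F^W_R \vee \F^\Lambda_T$. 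Everything else is bookkeeping with the product structure and the completeness of $\PP$.
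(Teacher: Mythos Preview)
Your proof is correct and follows essentially the same route as the paper: both establish that each $M_t=W_{\Lambda_t}$ is $\F$-measurable by writing it as the composition of the measurable map $\omega\mapsto(\Lambda_t(\omega),\omega)$ with the jointly measurable map $(r,\omega)\mapsto W_r(\omega)$, conclude $\F^M_T\subseteq\F$, and then combine with $\F^\Lambda_T\subseteq\F$ to get $\F_T\subseteq\F$ and hence $\F_t\subseteq\F$ by monotonicity. The only cosmetic difference is that you pass through the intermediate $\sigma$-algebra $\F^W_R\vee\F^\Lambda_T$ before landing in $\F$, whereas the paper works directly with $\F$; also note a small slip in your phrasing ``$\F^\Lambda_t\otimes$-to-\ldots'' where the domain $\sigma$-algebra should simply be $\F$ (or $\F^\Lambda_T\vee\F^W_R$).
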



In this context the following result is naturally derived.
\begin{proposition}\label{M-F-martingale}
The process $M = W_\Lambda$ is an $\FF$-martingale.
\end{proposition}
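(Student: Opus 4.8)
The plan is to verify the two defining properties of a martingale: integrability and the martingale identity for the filtration $\FF$ defined in \eqref{filtration-M-lambda}. Integrability is immediate: for fixed $t$, conditioning on $\F^\Lambda_T$ (equivalently on $\Lambda_t$) and using the independence of $W$ and $\Lambda$ from Proposition~\ref{AufteilungSigmaAlgF}, the random variable $M_t = W_{\Lambda_t}$ has, conditionally on $\Lambda_t = r$, the law of $W_r \sim N(0,r)$; since $\Lambda$ takes values in $[0,R]$ with $R < \infty$, we get $\EE[M_t^2] = \EE[\Lambda_t] \leq R < \infty$, so in particular $M_t \in L^1$.

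For the martingale property, fix $\st$. The key step is to compute $\EE[M_t \mid \F_s]$, and the natural route is to first enlarge to $\HH$, where $W$ is a genuine Brownian motion, and then project back. Concretely, I would argue that $\F_s \subseteq \Hh_{\Lambda_s} = \F^W_{\Lambda_s} \vee \F^\Lambda_T$: indeed $\F^\Lambda_s \subseteq \F^\Lambda_T$, and $\F^M_s \subseteq \F^W_{\Lambda_s} \vee \F^\Lambda_T$ because each $M_u = W_{\Lambda_u}$ for $u \leq s$ is measurable with respect to the $\sigma$-algebra generated by the Brownian path up to the (now $\F^\Lambda_T$-measurable) time $\Lambda_u \leq \Lambda_s$ together with $\F^\Lambda_T$. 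Then by the tower property,
\[
\EE[M_t \mid \F_s] = \EE\big[\, \EE[W_{\Lambda_t} \mid \Hh_{\Lambda_s}] \,\big|\, \F_s \big].
\]
Now $W$ is a Brownian motion with respect to the time-changed filtration $(\Hh_{\Lambda_u})_u$ — this is where the independence of $W$ and $\Lambda$ and the fact that $\Lambda_t$ is $\Hh_0$-measurable enter, so that $W_{\Lambda_t} - W_{\Lambda_s}$ is centred and independent of $\Hh_{\Lambda_s}$ given the values $\Lambda_s, \Lambda_t$ — hence $\EE[W_{\Lambda_t} \mid \Hh_{\Lambda_s}] = W_{\Lambda_s} = M_s$. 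Since $M_s$ is $\F_s$-measurable, the outer conditional expectation gives $\EE[M_t \mid \F_s] = M_s$, as required. Finally, right-continuity: the identity $\F_t = \bigcap_{s>t}(\F^M_s \vee \F^\Lambda_s)$ makes $\FF$ right-continuous by construction, and adaptedness of $M$ to $\FF$ is clear from \eqref{filtration-M-lambda}.

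The main obstacle I anticipate is the careful justification of the inclusion $\F_s \subseteq \Hh_{\Lambda_s}$ and of the statement that $W$ is a Brownian motion in the time-changed filtration $(\Hh_{\Lambda_u})$: one must be precise about measurability of the random time-change composed with the Brownian path, and invoke the independence structure \eqref{product-probability}–\eqref{Assumption 1} together with Proposition~\ref{AufteilungSigmaAlgF} to see that knowing the whole path of $\Lambda$ does not bias the Brownian increments. Once this is set up cleanly — most likely as a lemma stating that $W$ is an $\HH$-Brownian motion and $\Lambda$ is an $\HH$-time-change — the martingale computation is a short application of the optional sampling / tower argument above, and should perhaps be packaged so it can be reused when proving the change of variable formulas in Section~\ref{S4}.
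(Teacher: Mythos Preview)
Your argument is correct, but it routes through a different intermediate $\sigma$-algebra than the paper does. You tower through $\Hh_{\Lambda_s}=\F^W_{\Lambda_s}\vee\F^\Lambda_T$ and invoke that $W$ is an $\HH$-Brownian motion plus optional sampling, so that $\EE[W_{\Lambda_t}\mid\Hh_{\Lambda_s}]=W_{\Lambda_s}$. The paper instead towers through $\F^M_s\vee\F^\Lambda_T$: it shows directly, via the product-space independence and a result in Kallenberg, that $\sigma(M_t-M_s,\F^\Lambda_T)$ is independent of $\F^M_s$, whence $\EE[M_t\mid\F^M_s\vee\F^\Lambda_T]=M_s$; the inclusion $\F_s\subseteq\F^M_s\vee\F^\Lambda_T$ is then essentially definitional. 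Your approach is the one the paper itself uses later (after Proposition~\ref{W-BM}) to note that $M$ is an $\hat\HH$-martingale, so it is perfectly natural, but it forward-references the facts that $W$ is an $\HH$-Brownian motion and that $\F_s\subseteq\Hh_{\Lambda_s}$, which the paper establishes only in Section~\ref{section:enlargement-of-filtration}. The paper's route buys self-containment at this point of the exposition; yours buys a cleaner connection to the time-change calculus used throughout. Your integrability argument ($\EE[M_t^2]=\EE[\Lambda_t]\leq R$) is in fact tidier than the paper's, which bounds $\EE[|M_t|]$ via H\"older and a change of variable.
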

The proofs of the results here above are given in the Appendix for completeness.
 Observe that the stochastic process $W$ is not necessarily a Brownian motion under the filtration $\FF$. 
 Here below we introduce a new information flow under which $W$ is a Brownian motion.

\subsection{An enlarged filtration}\label{section:enlargement-of-filtration}
We introduce a new information flow $\HH$ that contains information about the Brownian motion $W$ up to time $t$ and all the information of the time-change $\Lambda$ up to time $T$.  
\begin{definition}\label{def:filtration-H}
The filtration $\HH= (\Hh_r)_{r \in [0,R]}$ is  the {\it initial enlargement} of $\mathbb{F}^W$ by $\F^\Lambda_T$: 
  \begin{align*}\label{filtration-H}
   \Hh_r&:=\F_r^W \vee\F^\Lambda_T \,, \qquad r\in[0,R]\, .
  \end{align*}
\end{definition}

\par \medskip
In general, a martingale will not preserve its martingale property under a larger filtration. Many papers have been dedicated to the study of when this property is preserved, see, e.g., \cite{jeulin2006semi, jacod1985grossissement, stricker1978calcul} and \cite[Chapter VI]{protter2005stochastic}. 
Techniques of enlargement of filtrations have been recently widely used in mathematical finance, in particular, in insider trading models and in models of default risk. It is an important tool in modelling of asymmetric information between different agents and the possible additional gain due to this information (see, e.g.,~\cite{amendinger2000martingale, imkeller1996enlargement, elliott1999incomplete, di2008anticipative, J-book}). In this paper, we use the filtration $\HH$ to prove the change of variable formulas (Theorems \ref{TrafoFormulaEq} and \ref{TrafoFormulaBack}) and we apply this to solve an optimal control problem in Section \ref{section:optimal-control}. The role of information in optimisation problems with time-change was already studied and exploited in \cite{di2014bsdes, diNunno}. There, a maximum principle approach 
was used mixing enlarged filtrations and partial information for time-changed dynamics with an absolutely continuous time-change. This work is then extended in \cite{DNG20} to controlled Volterra type dynamics driven by time-change L\'evy noises.

\par \medskip
Observe that it holds
\begin{equation*}\label{FinH}
 \F_T\subseteq \F = \Hh_{R}\,.
\end{equation*} 



\par \medskip
Hereafter we show two crucial properties for the upcoming applications. Their proofs are presented in the Appendix. 
\begin{proposition}\label{Hcont}
 The filtration $\HH$ is continuous and complete.
\end{proposition}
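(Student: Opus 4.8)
\emph{Sketch of proof.} My plan is to check separately the three requirements contained in the statement: that every $\Hh_r$ contains the $\PP$-null events, that $\HH$ is left-continuous, and that it is right-continuous. Completeness is immediate, since by construction $\F^W_r\supseteq\N$, so that $\Hh_r=\F^W_r\vee\F^\Lambda_T\supseteq\N$ for every $r\in[0,R]$. For left-continuity, one uses that joining a fixed $\sigma$-algebra with an increasing family commutes with the increasing union, so that $\bigvee_{s<r}\Hh_s=\big(\bigvee_{s<r}\F^W_s\big)\vee\F^\Lambda_T$; since $W$ has continuous paths, $W_r=\lim_{s\uparrow r}W_s$ is $\bigvee_{s<r}\F^W_s$-measurable, whence $\bigvee_{s<r}\F^W_s=\F^W_r$ and therefore $\bigvee_{s<r}\Hh_s=\Hh_r$.

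The substantive point is right-continuity. The inclusion $\Hh_r\subseteq\Hh_{r+}$, with $\Hh_{r+}:=\bigcap_{s>r}\Hh_s$, is trivial; the difficulty with the reverse inclusion is that $\bigcap$ does not distribute over $\vee$, and this is exactly where the independence of $\F^\Lambda_T$ and $\F^W_R$ from Proposition \ref{AufteilungSigmaAlgF} enters. I would show that $\EE[Z\mid\Hh_{r+}]=\EE[Z\mid\Hh_r]$ for every bounded random variable $Z$. Granting this, for $A\in\Hh_{r+}$ one has $\ind_A=\EE[\ind_A\mid\Hh_{r+}]=\EE[\ind_A\mid\Hh_r]$ $\PP$-a.s., so $\ind_A$ coincides $\PP$-a.s.\ with an $\Hh_r$-measurable function, and completeness of $\Hh_r$ forces $A\in\Hh_r$; hence $\Hh_{r+}\subseteq\Hh_r$.

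To prove the displayed identity, recall from Proposition \ref{AufteilungSigmaAlgF} that $\F=\F^W_R\vee\F^\Lambda_T$, so every bounded random variable is $\F^W_R\vee\F^\Lambda_T$-measurable; by a monotone class argument the linear span of products $XY$, with $X$ bounded and $\F^W_R$-measurable and $Y$ bounded and $\F^\Lambda_T$-measurable, is dense in $L^2(\F^W_R\vee\F^\Lambda_T)$, and since conditional expectation is an $L^2$-contraction it is enough to treat $Z=XY$. The factor $Y$ is measurable both for $\Hh_r$ and for $\Hh_{r+}$, so it pulls out of both conditional expectations, and the claim reduces to $\EE[X\mid\Hh_{r+}]=\EE[X\mid\F^W_r]$ (the right-hand side equals $\EE[X\mid\Hh_r]$ because the extra information $\F^\Lambda_T$ is independent of $\F^W_R\supseteq\sigma(X)\vee\F^W_r$). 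For this reduced claim I would pick a sequence $s_n\downarrow r$ with $s_n>r$, so that $\bigcap_n(\F^W_{s_n}\vee\F^\Lambda_T)=\Hh_{r+}$ and $\bigcap_n\F^W_{s_n}=\F^W_r$, the latter by right-continuity of the augmented Brownian filtration $\FF^W$; the downward martingale convergence theorem then gives $\EE[X\mid\F^W_{s_n}\vee\F^\Lambda_T]\to\EE[X\mid\Hh_{r+}]$ and $\EE[X\mid\F^W_{s_n}]\to\EE[X\mid\F^W_r]$, while for each fixed $n$ one has $\EE[X\mid\F^W_{s_n}\vee\F^\Lambda_T]=\EE[X\mid\F^W_{s_n}]$, again by the independence of $\F^\Lambda_T$ and $\F^W_R$; passing to the limit, the two sides agree.

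The only genuine obstacle is the failure of $\bigcap$ to distribute over $\vee$; once it is circumvented as above, the rest is routine bookkeeping. Alternatively, right-continuity of $\HH$ can be deduced from Jacod's theorem on initial enlargements of filtrations, whose absolute-continuity hypothesis is trivially met here because $W$ and $\Lambda$ are independent.
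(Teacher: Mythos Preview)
Your argument is correct. Completeness and left-continuity are handled essentially as in the paper: the authors also derive $\Hh_t\subseteq\bigvee_{s<t}\Hh_s$ from the left-continuity of $\FF^W$ together with the elementary set-theoretic observation that $\sigma\big(\bigcup_{s<t}\F^W_s\big)\cup\F^\Lambda_T\subseteq\sigma\big(\bigcup_{s<t}\F^W_s\cup\F^\Lambda_T\big)$, and completeness is simply read off from $\N\subseteq\F^W_t\subseteq\Hh_t$.

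The genuine difference is in right-continuity. The paper disposes of it in one line by invoking \cite[Theorem 1]{wu1982property}, a general result on right-continuity under enlargement. You instead give a direct, self-contained proof: reduce to $\EE[Z\mid\Hh_{r+}]=\EE[Z\mid\Hh_r]$ via completeness, use the monotone class theorem to restrict to $Z=XY$ with $X\in b\F^W_R$ and $Y\in b\F^\Lambda_T$, pull out $Y$, and then exploit the independence of $\F^\Lambda_T$ and $\F^W_R$ (Proposition \ref{AufteilungSigmaAlgF}) to identify $\EE[X\mid\F^W_{s_n}\vee\F^\Lambda_T]$ with $\EE[X\mid\F^W_{s_n}]$ before passing to the limit by backward martingale convergence and the right-continuity of the augmented Brownian filtration. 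This is exactly the mechanism hidden inside the cited theorem, specialised to the situation where the enlarging $\sigma$-algebra is independent of the original filtration; your version has the advantage of being fully transparent about where independence enters and of not relying on an external reference, at the cost of a few more lines. Your closing remark about Jacod's initial-enlargement theorem is in the same spirit as the paper's citation of Wu.
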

\begin{proposition}\label{W-BM}
The stochastic process $(W_r)_{r\in[0,R]}$ is an $\HH$-Brownian motion. 
\end{proposition}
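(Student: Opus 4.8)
The plan is to check directly that $(W_r)_{r\in[0,R]}$ satisfies the definition of a Brownian motion relative to $\HH$: it should be $\HH$-adapted, have a.s.\ continuous paths with $W_0=0$, and have increments $W_r-W_s$ independent of $\Hh_s$ with law $N(0,r-s)$ for all $0\le s\le r\le R$. Adaptedness is immediate, since $W_r$ is $\F^W_r$-measurable and $\F^W_r\subseteq\Hh_r$ by definition of $\HH$. Path continuity and $W_0=0$ are properties of the process $W$ alone and are unaffected by the choice of filtration, and the law of $W_r-W_s$ is $N(0,r-s)$ because this depends only on $\PP$, under which $W$ is a Brownian motion. So the only substantive point is the independence of $W_r-W_s$ from $\Hh_s$, and the key tool is the independence of $\F^\Lambda_T$ and $\F^W_{R}$ provided by Proposition~\ref{AufteilungSigmaAlgF}(ii).

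I would establish this independence via Dynkin's $\pi$--$\lambda$ theorem. The $\sigma$-algebra $\sigma(W_r-W_s)$ is generated by the $\pi$-system $\{\{W_r-W_s\in C\}:C\in\B(\IR)\}$, and $\Hh_s=\F^W_s\vee\F^\Lambda_T$ is generated by the $\pi$-system $\Pi_s:=\{A\cap B:A\in\F^W_s,\ B\in\F^\Lambda_T\}$ (each of $\F^W_s,\F^\Lambda_T$ contains $\Omega$ and is stable under finite intersections, and their union generates $\Hh_s$). Hence it suffices to show, for all $C\in\B(\IR)$, $A\in\F^W_s$ and $B\in\F^\Lambda_T$, that
\begin{align*}
\PP\big(\{W_r-W_s\in C\}\cap A\cap B\big)=\PP(W_r-W_s\in C)\,\PP(A\cap B)\,.
\end{align*}

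Since $W_s,W_r$ are $\F^W_{R}$-measurable, the event $\{W_r-W_s\in C\}\cap A$ lies in $\F^W_{R}$ while $B\in\F^\Lambda_T$, so Proposition~\ref{AufteilungSigmaAlgF}(ii) gives $\PP(\{W_r-W_s\in C\}\cap A\cap B)=\PP(\{W_r-W_s\in C\}\cap A)\,\PP(B)$. Next, because $W$ is a Brownian motion for its own filtration $\mathbb{F}^W$, the increment $W_r-W_s$ is independent of $\F^W_s\ni A$, whence $\PP(\{W_r-W_s\in C\}\cap A)=\PP(W_r-W_s\in C)\,\PP(A)$; and $\PP(A\cap B)=\PP(A)\,\PP(B)$, again by Proposition~\ref{AufteilungSigmaAlgF}(ii). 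Chaining these three identities yields the displayed equality, so $W_r-W_s$ is independent of $\Hh_s$ with law $N(0,r-s)$. Together with the completeness and continuity of $\HH$ (Proposition~\ref{Hcont}), this shows that $(W_r)_{r\in[0,R]}$ is an $\HH$-Brownian motion. I do not expect a genuine obstacle; the one point to keep straight is to invoke the independence of $\F^\Lambda_T$ and $\F^W_{R}$ at the level of the \emph{full} $\sigma$-algebra $\F^W_{R}$, which contains the event $\{W_r-W_s\in C\}\cap A$, rather than at the level of $\F^W_s$ or $\F^W_r$.
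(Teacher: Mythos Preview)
Your proof is correct and follows essentially the same approach as the paper: both show the increment $W_r-W_s$ is independent of $\Hh_s=\F^W_s\vee\F^\Lambda_T$ by exploiting the independence of $\F^\Lambda_T$ and $\F^W_R$ from Proposition~\ref{AufteilungSigmaAlgF}(ii) together with the $\FF^W$-Brownian property of $W$. The only cosmetic difference is that the paper packages this via a conditional expectation identity $\EE[\ind_{\{W_r-W_s\in C\}}\mid\F^W_s\vee\F^\Lambda_T]=\EE[\ind_{\{W_r-W_s\in C\}}\mid\F^W_s]=\PP(W_r-W_s\in C)$, whereas you unwind the same independence through a $\pi$--$\lambda$ argument on product sets $A\cap B$; both routes are standard and equivalent.
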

Notice that, for any $t$, $\Lambda_t$ is an $\HH$-stopping time. Indeed, for $r \in [0,R]$, we have
\begin{align}\label{stopping-time} 
 \{\Lambda_t\leq r\}\in\F^\Lambda_t\subseteq\F^\Lambda_T\subseteq\Hh_0 \subseteq\Hh_r\,, \qquad t\in [0,T]\,.
\end{align}

Let 
\begin{equation}\label{TCFhat} \hat{\mathbb{H}}=(\hat{\Hh}_t)_{t \in [0,T]}. \qquad \hat{\Hh}_t := \Hh_{\Lambda_t}\end{equation} be the time-changed filtration of $(\Hh_r)_{r \in [0,R]}$, i.e.,
\begin{equation}\label{TCFLambda}\mathcal{H}_{\Lambda_t} = \left\{A \in \Hh_R: A \cap \{\Lambda_t \leq r\} \in \Hh_r, \,\forall r \in [0,R] \right\}\,.\end{equation}


From Proposition \ref{W-BM}, we know that $W$ is an $\HH$-Brownian motion and thus an $\HH$-martingale. The optional sampling theorem yields that
$M$ is an $\hat{\HH}$-martingale.

Then, for all $s \leq t$, $M_s$ is $\hat{\Hh}_{t}$-measurable and, thanks to  \eqref{stopping-time} and the monotonicity of the time-change, also the random variables $\Lambda_s$, for $s\leq t$ are
  $\hat{\Hh}_{t}$-measurable. Then we have
 \begin{align*}
  \F^M_t \vee \F^\Lambda_t \subseteq \hat{\Hh}_{t}\,, \qquad t \in [0,T]\,.
 \end{align*}
Since $\hat{\HH}$ is right-continuous, it holds that  
\begin{equation*}\label{eq:F-in-H-hat}
\F_t\subseteq\hat{\Hh}_{t}, \,\,\text{for all} \,\,t\in[0,T].
\end{equation*}

\subsection{Analysis of the generalised inverse of the time-change}
We present in this subsection some results on the generalised inverse of the time-change $\Lambda$ that we shall need in our derivations later in Section \ref{section:optimal-control}. 
We start by introducing the following definitions.
\begin{definition}\label{def:time-change} \hspace{0.1cm}
\begin{enumerate}
\item The {\it first hitting time} process or {\it generalised inverse} $(\Gamma)_{0\leq r\leq R}$ of the time-change $(\Lambda_t)_{0\leq t \leq T}$ is defined as 
the mapping $\Gamma: \;[0,R]\times\Omega\rightarrow[0,T],$ such that
\label{TC-inverse}
$$
\Gamma (r,\omega_\Lambda,\omega_W) = \Gamma_r(\omega_\Lambda)= \left\{
    \begin{array}{ll}
        \inf\{t;\, \Lambda_t >r\} & \mbox{if } r \in [0,\Lambda_T)\,, \\
        T & \mbox{if } r \in[\Lambda_T,R]\,.
    \end{array}
\right.
$$ 
\item A process $(X_r)_{0\leq r \leq R}$ is called {\it $\Lambda$-adapted} if $X$ is constant on $[\Lambda_{t_-}, \Lambda_t]$, for any $t \in [0,T]$. 
Notice that the terminology {\it $\Lambda$-continuous} is used in \cite{revuz2013continuous} for the same concept. \label{adapted-to-time-change}
\end{enumerate}
\end{definition}
The processes $\Lambda$ and $\Gamma$ as introduced in Definitions \ref{def:time-change} play symmetric roles and we have the following properties:
\begin{enumerate}[label=\textbf{P.\arabic*}]
\item $(\Gamma_r)_{0\leq r\leq R}$ is an increasing right-continuous family of $\FF$-stopping times and the time-changed filtration $(\F_{\Gamma_r})_{0\leq r\leq R}$ given by
$$\F_{\Gamma_r}=\left\{A\in\F_{T}: A\cap\{\Gamma_r\leq u\}\in\F_u \,,\,\forall u\in[0, T]\right\}$$
is a right-continuous filtration (see, e.g.,~\cite[Chapter V, Proposition 1.1]{revuz2013continuous}). \label{P1}
\item $\Gamma$ is $\mathbb{F}_\Gamma$-adapted. Indeed as $\Gamma$ is a family of non-negative $\FF$-stopping times (see \cite[Proposition I.1.28 a)]{Jacod2003} and the right-continuity of the filtration $\FF$), it holds for $t, t'\geq 0$,
$$\{\Gamma_r \leq t'\} \cap \{\Gamma_r \leq t\} = \{\Gamma_r \leq t' \wedge t\} \in \F_{t'\wedge t} \subset \F_t\,,$$
which implies that $\{\Gamma_r \leq t'\} \in \F_{\Gamma_r}$ for all $t'\in [0,T]$, $r\in [0,R]$. \label{P2}
\item \label{P3} It holds $$
\Lambda_t= \left\{
    \begin{array}{ll}
        \inf\{r;\, \Gamma_r>t\}\,, & \mbox{if } t \in [0,T)\,, \\
        \Lambda_T\,, & \mbox{if } t=T\,.
    \end{array}
\right.
$$ 
Since $\Gamma$ is $\mathbb{F}_\Gamma$-adapted, then by symmetry, we deduce that $\Lambda$ is an increasing family of $\FF_\Gamma$-stopping times and the time-changed filtration $(\F_{\Gamma_{\Lambda_r}})_{0\leq r\leq R}$ is a right continuous filtration. 
\item $\Gamma$ is continuous if and only if $\Lambda$ is {\it strictly} increasing. 
In this case, we have
\begin{align}\label{general-inverse}
 \Gamma\circ\Lambda_t=\inf\{s\in[0,T]:\Lambda_s>\Lambda_{t-}\}=t, \qquad 0\leq t\leq T\,,
\end{align}
because either $\Lambda$ is continuous in $t$, in which case $\Lambda_{t-}=\Lambda_t$ or $\Lambda$ jumps in $t$ in which case $\Lambda_t>\Lambda_{t-}$. 
{\it But} notice that if $\Gamma$ is continuous, then $\Lambda$ is still {\it only right-continuous} in general. \label{P4}
\item When $\Lambda$ is strictly increasing, we have $\F_{\Gamma_{\Lambda_t}} =\F_t$\,, for all $t \in [0,T]$, see, e.g., \cite[Proposition 9.9 (iii)]{kallenberg2006foundations}. \label{P5}
\end{enumerate}
Observe that we do not necessarily have $\F_{\Lambda_{\Gamma_r}} = \F_r$, for $r \in [0,R]$ as $\Gamma$ is only increasing and has a flat part due to the fact that $\Lambda$ admits jumps.

\medskip
Our work features controlled dynamics where $M$ is part of the driving noise. Hence we shall consider stochastic integration with respect to $M$. In this context we will work with change of variable formulas for integrals with respect to $M$. In the literature we can find two results in this direction, see {\cite[Theorem 3.1]{kobayashi2011stochastic}} and \cite[Proposition 10.21]{jacod2006calcul} respectively reported in the following two statements in Lemma \ref{lem:change-of-variable} below. For these results, observe that if $(S_r)_{0\leq r\leq R}$ is an $\HH$-semimartingale, then the time-changed process $S_{\Lambda}$ is also an $\hat{\HH}$-semimartingale (see \cite[Corollary 10.12]{jacod2006calcul}). Also for a given 
$\HH$-semimartingale $(S_r)_{0\leq r \leq R}$, we will denote the class of $S$-integrable processes by $L(S, \mathbb{H})$. That is the class of $\HH$-predictable processes for which the It\^o stochastic integral with respect to $S$ is well defined.
\begin{lemma}\label{lem:change-of-variable}
	For a general time-change $\Lambda$, let $S$ be a $\Lambda$-adapted $\HH$-semimartingale and $\Gamma$ be its generalised inverse (see Definition \ref{def:time-change}). Then it holds
	\begin{itemize}
\item[i)]
	 If $\nu \in L(S_\Lambda, \hat{\mathbb{H}})$, then $\nu_{\Gamma_-} \in L(S, \hat{\mathbb{H}}_{\Gamma})$ and 
	\begin{equation*}\label{eq:change-of-variable1}
	\int\limits_0^{t}\nu_s \, \dS_{\Lambda_s} = \int\limits_0^{\Lambda_t}\nu_{\Gamma_{s-}}\, \dS_s\,,  \quad \mbox{a.s.} \quad \forall \,0\leq t\leq T\,.
	\end{equation*}
\item[ii)] 
If $\tilde{\nu} \in L(S, \mathbb{H})$, then $\tilde{\nu}_{\Lambda_{-}} \in L(S_{\Lambda}, \hat{\mathbb{H}})$ and 
	\begin{equation}\label{eq:change-of-variable2}
	\int\limits_0^{\Lambda_t}\tilde{\nu}_s \, \dS_s = \int\limits_0^t\tilde{\nu}_{\Lambda_{s-}}\, \dS_{\Lambda_s}, \quad \mbox{a.s.} \quad \forall \,0\leq t\leq T\,.
	\end{equation}
	\end{itemize}
\end{lemma}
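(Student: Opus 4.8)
The plan is to treat part (i) and part (ii) as essentially the same statement viewed from the two ends of the time-change/generalised-inverse pair, and to reduce both to the classical time-change formula for stochastic integrals once the measurability bookkeeping is in place. First I would recall the classical result (e.g. \cite[Theorem 10.17]{jacod2006calcul}, \cite[Theorem 3.1]{kobayashi2011stochastic}): if $N$ is an $\HH$-semimartingale and $\Lambda$ is an $\HH$-adapted increasing time-change with right-continuous time-changed filtration $\hat{\HH}$, then for an $\hat{\HH}$-predictable integrand $\phi$ one has $\int_0^t \phi_s \, \ud N_{\Lambda_s} = \int_0^{\Lambda_t}\phi_{\Gamma_{s-}}\, \ud N_s$ provided the right-hand integrand is $\HH_\Gamma$-predictable and $N_\Lambda$ is the $\hat{\HH}$-semimartingale associated to $N$ by \cite[Corollary 10.12]{jacod2006calcul}. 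The key structural input that makes this applicable in our setting is the $\Lambda$-adaptedness of $S$ in the hypothesis: because $S$ is constant on each interval $[\Lambda_{t-},\Lambda_t]$, the time-changed process $S_\Lambda$ and $S$ itself are linked by $S_{\Lambda_{\Gamma_r}} = S_r$ for all $r\in[0,R]$ (the flat parts of $\Gamma$ land exactly on the jump intervals of $\Lambda$ on which $S$ does not move), so composing with $\Lambda$ and $\Gamma$ genuinely inverts.

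For part (ii), which is the cleaner direction, I would argue as follows. Given $\tilde\nu\in L(S,\HH)$, the process $s\mapsto\tilde\nu_{\Lambda_{s-}}$ is $\hat{\HH}$-predictable: this follows because $\Lambda_{s-}$ is $\hat\Hh_s$-measurable (monotonicity of $\Lambda$ together with \eqref{stopping-time}, so that $\{\Lambda_{s-}\le r\}\in\Hh_r$), and predictability is preserved under the time-change composition by \cite[Proposition 10.7 or 10.8]{jacod2006calcul}. Then I would invoke the classical formula with $N=S$, $\phi = \tilde\nu$, noting that $(\phi\cdot S)$ composed with $\Lambda$ equals $(\phi_{\Lambda_-}\cdot S_\Lambda)$, which is exactly \eqref{eq:change-of-variable2}; integrability of $\tilde\nu_{\Lambda_-}$ against $S_\Lambda$ comes for free because the stochastic integral $\int_0^{\Lambda_t}\tilde\nu_s\,\ud S_s$ is a well-defined $\hat\HH$-semimartingale by optional stopping of $\int_0^{\cdot}\tilde\nu_s\,\ud S_s$ at the $\HH$-stopping times $\Lambda_t$. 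The symmetric argument for part (i): starting from $\nu\in L(S_\Lambda,\hat\HH)$, use that $\Gamma$ is an increasing family of $\FF$-stopping times with right-continuous time-changed filtration $\HH_\Gamma$ (the analogue of \ref{P1} at the level of $\HH$), so that $\nu_{\Gamma_-}$ is $\HH_\Gamma$-predictable, apply the classical time-change formula in the reverse direction, and then use $\Lambda\circ\Gamma$ being the identity on the non-flat part to conclude.

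The main obstacle I anticipate is not the stochastic calculus but the measurability/predictability transfer under composition with $\Lambda$ and $\Gamma$ — in particular making precise that $\nu_{\Gamma_-}$ is $\HH_\Gamma$-predictable when $\nu$ is only $\hat\HH$-predictable, and dually that $\tilde\nu_{\Lambda_-}$ is $\hat\HH$-predictable — together with the careful handling of the jumps of $\Lambda$ and the corresponding flat stretches of $\Gamma$, where one must check that the $\Lambda$-adaptedness of $S$ genuinely kills any contribution over $[\Lambda_{t-},\Lambda_t]$ so that the two integrals agree pathwise. Once these points are settled, both identities are immediate specialisations of the classical change-of-variable formula, and I would simply cite \cite[Theorem 3.1]{kobayashi2011stochastic} and \cite[Proposition 10.21]{jacod2006calcul} for the full details rather than reproving them.
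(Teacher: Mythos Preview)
Your proposal is correct and in fact lands exactly where the paper does: the paper does not prove this lemma at all but simply records it as a direct quotation of \cite[Theorem 3.1]{kobayashi2011stochastic} for part i) and \cite[Proposition 10.21]{jacod2006calcul} for part ii), which is precisely what you conclude you would do after your sketch. Your additional discussion of the predictability transfer and the role of $\Lambda$-adaptedness is accurate background for why those cited results apply, but the paper itself offers none of this and treats the lemma as an imported statement.
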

Observe that $\hat{\mathbb{H}}_{\Gamma}$ is well defined because $\Gamma$ is a family of $\hat{\mathbb{H}}$-stopping times by Property \ref{P1} and \eqref{eq:F-in-H-hat}.

In our context, if the time-change $\Lambda$ was continuous, then $S=W$ would be trivially $\Lambda$-adapted, thus the results above would apply. However, we aim at working with a {\it general} time-change (see Definition \ref{def:time-change}) and then $W$ is not necessarily $\Lambda$-adapted. We resolve introducing a new change of variable formula that suits our purposes.	
	
In the next section, we prove the change of variable formulas 
	for integrals w.r.t.~a time-changed Brownian motion, where \textit{general} time-changes are considered.

\section{Change of variable formulas for integrals w.r.t~a time-changed Brownian motion}\label{S4}

Let $(D_{[0,T]}, d)$ be the Skorohod space of c\`adl\`ag real-valued functions on $[0,T]$, see \cite{billingsley2013} for an introduction to Skorohod spaces. 
Define the set $\mathbb{S}$ as 
\begin{equation}\label{eq:setS}
\mathbb{S}= \{\Lambda \mid \Lambda: \Omega \times [0,T]\rightarrow [0,R]\,\, \text{increasing}, \Lambda(\omega, \cdot) \in  D_{[0,T]} \,\, \text{with finitely many jumps,}\,\, \Lambda_0 = 0\}.
\end{equation}

The aim in this section is to write the stochastic integral of $\nu$ w.r.t.~$M$ as a stochastic integral of $\nu$ w.r.t.~$W$ with $\nu$ being a process satisfying the following general condition.


\begin{assumption}\label{assumption-2} 
Let $\Lambda_{[0,s)}(u) = \Lambda_u \mathbf{1}_{[0,s)}(u)$, $u\in [0,T]$ and define $M_{[0,s)}$ similarly.
 Assume $\nu:[0,T]\times \Omega \rightarrow \R$ is a functional of the past of $\Lambda$ and the past of $M$ as follows
 \begin{align*}
  \nu_s=\bar\nu\left(\Lambda_{[0,s)},M_{[0,s)},s\right)\,,
 \end{align*}
for a continuous functional $\bar{\nu}: (\mathbb{S}, \mathcal{B}(\mathbb{S})) \times (D_{[0,T]}, \mathcal{B}(D_{[0,T]})) , \times([0,T], \mathcal{B}([0,T])) \rightarrow \R$, where $\mathcal{B}(\cdot)$ denotes the Borel sigma-algebra of a given set. By continuity, we mean that for all $(x,y, s) \in \mathbb{S} \times D_{[0,T]} \times [0,T]$, for all $\varepsilon>0$, there exists $\delta>0$, such that 
$$|\bar{\nu}(\bar{x}, \bar{y},\bar{s})-\bar{\nu}(x,y,s)|<\varepsilon, \quad \mbox{for all } \bar{x}, \bar{y}, \bar{s} \mbox{ satisfying } \quad \sup\{d(\bar{x}, x), d(\bar{y},y), |\bar{s}-s|\} < \delta\,.$$
\end{assumption}
\medskip

First let us consider the special situation where $\Lambda$ is deterministic. To avoid misunderstandings, we write $\lambda$ instead of $\Lambda$ and $\gamma$ instead of $\Gamma$. Notice that in this case, the filtration 
$\Hh_t = \F^W_t$, $t\in [0,R]$ and $\Hh_{\lambda_t} =\F^W_{\lambda_t} = \sigma\{W_s, \,  s\leq \lambda_t\}$, $t\in [0,T]$. The latter follows from \cite[Chapter 1, Theorem 6]{shiryaev2007optimal}), where it is shown that under some conditions on the probability space the stopped filtration is the filtration generated by the stopped process. Moreover, recall that $W_\lambda$ is an $\FF^W_{\lambda}$-martingale. 
Then we have the following lemma from \cite[Lemma 2.2]{kusuoka2010malliavin}. 
\begin{lemma}\label{DeterministicTrafoformula}
Let $\lambda: [0,T] \rightarrow [0,R]$ be a right-continuous increasing deterministic function that has only finitely many points of discontinuity and is such that $\lambda_0=0$. Let $\gamma$ be the inverse function of $\lambda$. Define 
$(\F^W_{\lambda_t})_{0\leq t\leq T}$. Let $\nu$ be an $(\F^W_{\lambda_t})_{0\leq t\leq T}$-adapted c\`adl\`ag process and $\nu_-$ its left-limit process. Assume $\nu$ satisfies 
 \begin{align*}
  \EE\left[\int\limits_0^T|\nu_{s-}|^2\,\dlambda_s\right]<\infty\,.
 \end{align*}
Then it holds that $\nu_- \circ \gamma$ is $\FF^W$-adapted and  
\begin{align}\label{DeterministicTrafoformulaEq}
 \int\limits_0^t\nu_{s-}\,\dW_{\lambda_s}=\int\limits_0^{\lambda_t}\nu_-\circ \gamma_s\,\dW_s\,, \quad \text{a.s.} \quad t \in [0,T]\,.
\end{align}
The integral in the left-hand side of \eqref{DeterministicTrafoformulaEq} is in the sense of stochastic integrals by $\FF^W_\lambda$-martingales and that of the right-hand side is in the sense of stochastic integrals by $\FF^W$-martingales.
\end{lemma}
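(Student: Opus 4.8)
The plan is to reduce the statement to the classical time-change formula for stochastic integrals (as in Lemma \ref{lem:change-of-variable} ii), or its deterministic counterpart in \cite{revuz2013continuous}) applied to the $\FF^W$-martingale $W$, and then to transfer integrability and adaptedness across the substitution $s \mapsto \gamma_s$. First I would make precise the two filtrations involved: on $[0,T]$ we have $(\F^W_{\lambda_t})_{0\le t\le T}$ (a time-changed, hence right-continuous, filtration, with $\F^W_{\lambda_t}=\sigma\{W_s:\ s\le\lambda_t\}$ by the cited result of \cite{shiryaev2007optimal}), and on $[0,R]$ we have $\FF^W$ itself. The deterministic inverse $\gamma$ of $\lambda$ is continuous and increasing on $[0,\lambda_T)$ (because $\lambda$ is strictly... — here one only needs $\lambda$ right-continuous increasing, and $\gamma$ is the right-continuous generalised inverse), and $\lambda_{\gamma_s}\ge s$ with $\gamma_{\lambda_t}\le t$, the standard Galois-type inequalities. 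Since $\gamma_r$ is a (deterministic) time, $\{\gamma_r\le t\}\Leftrightarrow\{r\le\lambda_t\}$ (up to the usual care at jumps), so $\nu_-\circ\gamma_r$ is $\F^W_{\lambda_{\gamma_r}}$-measurable; combined with $\lambda_{\gamma_r}\le$ the relevant level, this gives $\F^W$-adaptedness of $s\mapsto\nu_{\gamma_s-}$ after checking left-continuity/predictability is preserved — $\gamma$ continuous means $\nu_-\circ\gamma$ is again left-continuous, hence $\FF^W$-predictable.

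Next I would verify the integrability needed to define the right-hand side: by the deterministic change-of-variables formula for Lebesgue--Stieltjes integrals,
\begin{align*}
\EE\left[\int_0^{\lambda_t}|\nu_{\gamma_s-}|^2\,\ud s\right]=\EE\left[\int_0^t|\nu_{s-}|^2\,\dlambda_s\right]<\infty,
\end{align*}
using that the image measure of $\Leb|_{[0,\lambda_t]}$ under $\gamma$ is $\ud\lambda$ on $[0,t]$ (this is exactly where right-continuity and the finitely-many-jumps hypothesis make the substitution clean — on each interval of constancy of $\lambda$, $\gamma$ jumps, contributing nothing to $\ud s$, and on each jump of $\lambda$, $\gamma$ is constant, contributing a point mass to $\ud\lambda$). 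So $\nu_-\circ\gamma\in L(W,\FF^W)$ and the It\^o integral $\int_0^{\lambda_t}\nu_-\circ\gamma_s\,\dW_s$ is well defined as an $\FF^W$-martingale.

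For the identity \eqref{DeterministicTrafoformulaEq} itself, I would first establish it for simple integrands $\nu$ of the form $\nu_s=\sum_i \xi_i\,\ind_{(t_i,t_{i+1}]}(s)$ with $\xi_i$ bounded and $\F^W_{\lambda_{t_i}}$-measurable, where both sides reduce to the same finite sum $\sum_i\xi_i\,(W_{\lambda_{t_{i+1}}}-W_{\lambda_{t_i}})$ after observing that $\gamma$ maps $(\lambda_{t_i},\lambda_{t_{i+1}}]$ onto $(t_i,t_{i+1}]$ appropriately; then pass to the limit using the It\^o isometry on both sides simultaneously (the isometries match by the change-of-variables identity above), approximating a general $\nu$ satisfying the $L^2(\dlambda)$ bound by such simple processes. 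The main obstacle I anticipate is the bookkeeping at the finitely many discontinuity points of $\lambda$: one must be careful that the left-limit $\nu_{s-}$ on the $t$-side corresponds correctly to $\nu_-\circ\gamma$ on the $r$-side precisely on the flat stretches and at the jump levels of $\lambda$, and that the simple-function approximation can be taken adapted to the correct (time-changed) filtration so that the resulting $\FF^W$-predictability survives the limit — this is a routine but delicate matching of the two parametrisations, and is really the only non-formal ingredient, the rest following from the classical time-change theorem for martingale stochastic integrals.
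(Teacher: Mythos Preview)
The paper does not give its own proof of this lemma; it is simply quoted as \cite[Lemma~2.2]{kusuoka2010malliavin}. So there is no in-paper argument to compare against. That said, your outline --- adaptedness of $\nu_-\circ\gamma$, matching the $L^2(\dlambda)$ and $L^2(\ud s)$ norms by the deterministic Lebesgue--Stieltjes substitution, verifying \eqref{DeterministicTrafoformulaEq} on simple integrands, then passing to the limit via the It\^o isometry --- is the standard route and is essentially how Kusuoka proceeds.

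Two points to tighten. First, you assert that $\gamma$ is continuous, but this holds only when $\lambda$ is \emph{strictly} increasing, which is not assumed here; if $\lambda$ has a flat stretch, $\gamma$ jumps at the corresponding level and $\nu_-\circ\gamma$ is merely c\`adl\`ag, not left-continuous. This does not damage the integral (the jump of $\gamma$ sits at a single $s$-value, which is $\dW$-null since $W$ is continuous), but it means you should establish $\FF^W$-adaptedness of $\nu_-\circ\gamma$ directly from $\lambda_t\le s$ for all $t<\gamma_s$, hence $\nu_{\gamma_s-}\in\F^W_s$, rather than via predictability through left-continuity. Second, your reference to Lemma~\ref{lem:change-of-variable}~ii) as a template is misleading: that result requires the integrator to be $\Lambda$-adapted, i.e.\ constant on each $[\lambda_{t-},\lambda_t]$, and $W$ is \emph{not} constant there when $\lambda$ jumps. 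This is exactly why a separate argument (Kusuoka's, or your direct one) is needed and why the paper isolates this lemma before proving Theorem~\ref{TrafoFormula}.
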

As a consequence of Assumption \ref{assumption-2}, we have the following properties of $\nu$.



\begin{lemma}\label{lem:measurability}
 Let $\FF$ be as in \eqref{filtration-M-lambda} and $\nu$ satisfy Assumption \ref{assumption-2}. Then 
 \begin{enumerate}
 \item $\nu$ is left-continuous. Namely, for all $s \in [0,T]$, $\nu_s= \lim_{s_n \uparrow s}\nu(s_n)$\,. \label{left-continuity}
  \item $(\nu_{s})_{s\in[0,T]}$ is $\FF$-adapted. \label{F-adapted}
  \item Let $\lambda$ be as in Lemma \ref{DeterministicTrafoformula}. Then $\left(\bar\nu(\lambda_{[0,t)},W_{\lambda_{[0,t)}},t)\right)_{t\in[0,T]}$ is $(\F^W_{\lambda_t})_{t\in[0,T]}$-adapted. \label{F-W-adapted}
 \end{enumerate}
\end{lemma}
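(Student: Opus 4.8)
The three assertions are of increasing complexity, so I would handle them in the order they are stated, always leaning on the continuity of the functional $\bar\nu$ from Assumption \ref{assumption-2} and on basic path properties of $\Lambda$ and $M$. For \ref{left-continuity}, the key observation is that the maps $s\mapsto \Lambda_{[0,s)}$ and $s\mapsto M_{[0,s)}$ (restrictions to the half-open interval) are \emph{left-continuous} as $D_{[0,T]}$-valued maps in the Skorohod topology: truncating a c\`adl\`ag path at $s^-$ rather than at $s$ exactly kills the jump at $s$, and as $s_n\uparrow s$ the truncated paths $\Lambda_{[0,s_n)}$ converge to $\Lambda_{[0,s)}$ (and similarly for $M$), while trivially $s_n\to s$. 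Composing with the continuous functional $\bar\nu$ gives $\nu_{s_n}=\bar\nu(\Lambda_{[0,s_n)},M_{[0,s_n)},s_n)\to \bar\nu(\Lambda_{[0,s)},M_{[0,s)},s)=\nu_s$, which is precisely left-continuity.

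For \ref{F-adapted}, I would argue that for each fixed $s$ the random variable $\nu_s=\bar\nu(\Lambda_{[0,s)},M_{[0,s)},s)$ is $\F^M_s\vee\F^\Lambda_s$-measurable, hence $\F_s$-measurable by \eqref{filtration-M-lambda}. The point is that $\Lambda_{[0,s)}$ is built only from $\{\Lambda_u:u<s\}$ and hence is $\F^\Lambda_{s^-}\subseteq\F^\Lambda_s$-measurable as a $D_{[0,T]}$-valued random element (measurability of the truncation map from $D_{[0,T]}$ to $D_{[0,T]}$ is standard, and one can also realise $\Lambda_{[0,s)}$ as an a.s.\ limit of the $\F^\Lambda_u$-measurable, $u<s$, finite-dimensional approximations); likewise $M_{[0,s)}$ is $\F^M_{s^-}$-measurable. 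Since $\bar\nu$ is continuous, hence Borel measurable, $\nu_s$ is measurable w.r.t.\ the join, and adaptedness follows. Combined with the left-continuity from part \ref{left-continuity}, $\nu$ is in particular predictable, which is what the later change of variable arguments will need.

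For \ref{F-W-adapted}, with $\Lambda$ replaced by the deterministic $\lambda$, the expression $\bar\nu(\lambda_{[0,t)},W_{\lambda_{[0,t)}},t)$ depends on $\omega$ only through $W_{\lambda_{[0,t)}}$, i.e.\ through the values $\{W_{\lambda_u}:u<t\}$, which are all $\F^W_{\lambda_u}\subseteq\F^W_{\lambda_t}$-measurable (using $\lambda_u\le\lambda_t$ and monotonicity); the remaining arguments $\lambda_{[0,t)}$ and $t$ are deterministic. Since $\F^W_{\lambda_t}=\sigma\{W_s:s\le\lambda_t\}$ as recalled just before Lemma \ref{DeterministicTrafoformula}, the truncated path $W_{\lambda_{[0,t)}}$ is an $\F^W_{\lambda_t}$-measurable $D_{[0,T]}$-valued element, and Borel measurability of $\bar\nu$ finishes the argument; the family is then $(\F^W_{\lambda_t})_{t\in[0,T]}$-adapted.

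The main obstacle I anticipate is being careful with measurability of the $D_{[0,T]}$-valued truncation maps $x\mapsto x\mathbf 1_{[0,s)}$ and of the evaluation/path-reconstruction maps in the Skorohod topology, and in particular checking that truncation at $s^-$ is continuous at the relevant sample paths (it can fail to be continuous at paths with a jump exactly at $s$ if one truncates at $s$ instead of $s^-$ — this is exactly why the half-open interval $[0,s)$ appears in Assumption \ref{assumption-2}, and why part \ref{left-continuity} gives left- rather than right-continuity). Everything else is a routine composition-of-measurable/continuous-maps argument.
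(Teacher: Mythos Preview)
Your proposal is correct and follows essentially the same route as the paper's proof: left-continuity via Skorohod convergence of the truncated paths $s\mapsto \Lambda_{[0,s)}$, $s\mapsto M_{[0,s)}$ composed with the continuous functional $\bar\nu$, and adaptedness in (ii)--(iii) via measurability of the truncation maps together with Borel measurability of $\bar\nu$. You even flag more explicitly than the paper does the one genuinely delicate point---that truncation at $s^-$ (hence the half-open interval) is what yields \emph{left}-continuity in the Skorohod metric---which the paper treats rather tersely.
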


\begin{proof}
Let $\Lambda^-_{[0,s)}(t) := \Lambda(t-) \mathbf{1}_{[0,s)}(t)$. Consider a sequence $(s_n)_{n\geq 0}$ with $s_n \uparrow s$ and $(\theta_n)_{n\geq 0}$ with  $\theta_n(t)=t$, for all $n$. It holds $\Lambda^-_{[0,s_n)}(\theta_n(t)) = \Lambda^-_{[0,s_n)}(t) = \Lambda(t-) \mathbf{1}_{[0,s_n)}(t)$.
Hence 
$$|\Lambda^-_{[0,s_n)}(\theta_n(t)) -\Lambda^-_{[0,s)}(t) |=\Lambda(t-) \mathbf{1}_{[s_n,s)(t)} \leq \Lambda(T) \mathbf{1}_{[s_n,s)(t)}\,,$$
which goes to $0$ when $n$ goes to $\infty$. It follows that $\lim_{n \rightarrow \infty} \Lambda^-_{[0,s_n)} = \Lambda^-_{[0,s)}$ with respect to the Skorohod topology.
Similarly, we prove that $\lim_{n \rightarrow \infty} M^-_{[0,s_n)} = M^-_{[0,s)}$ with respect to the Skorohod topology and 
statement \ref{left-continuity} follows in view of the hypothesis of continuity on $\bar{\nu}$.

To prove \ref{F-adapted} and \ref{F-W-adapted}, it is enough to observe that $\omega\mapsto(\Lambda_u(\omega)\ind_{[0,s)}(u))_{u\in[0,T]}$ is $\mathcal{B}(\mathbb{S})$-$\F^\Lambda_s$-measurable, that 
 $\omega\mapsto(M_u(\omega)\ind_{[0,s)}(u))_{u\in[0,T]}$ is $\mathcal{B}(D([0,T]))$-$\F^M_s$-measurable, for all $s \in [0,T]$, and that the functional $\bar\nu$ is continuous hence measurable, to conclude.
 \end{proof}

In the following theorem, we prove a change of variable formula of the type \eqref{DeterministicTrafoformulaEq} for $\Lambda$ stochastic.
\begin{theorem}\label{TrafoFormula}
 Let $\Lambda$ and $W$ be as in \eqref{W-Lambda} and $\F^\Lambda_T$ and $\F^W_{R}$ satisfy \eqref{Assumption 1}. Assume
 $\Lambda\in\mathbb{S}$ a.s., where $\mathbb{S}$ is as in \eqref{eq:setS}. Moreover, assume $\nu \in L(M, \mathbb{F})$ satisfies Assumption \ref{assumption-2}, and
 \begin{align}\label{finite-second-moment1}
 \EE\left[\int\limits_0^T|\nu_{s}|^2\,\text{d}\Lambda_s\right]<\infty\,.
 \end{align}
 Then it holds $\nu \circ \Gamma \in L(W, \mathbb{H})$ and 
 \begin{align}\label{TrafoFormulaEq}
  \int\limits_0^t\nu_{s}\,\dM_s=\int\limits_0^{\Lambda_t}\nu(\Gamma_s)\,\dW_s=:\int\limits_0^{R}\ind_{[0,\Lambda_t)}(s)\nu(\Gamma_s)\,\dW_s, \quad \text{a.s.} \quad \forall \,0\leq t\leq T\,.
 \end{align}
\end{theorem}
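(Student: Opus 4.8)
The strategy is to reduce the stochastic setting to the deterministic one of Lemma~\ref{DeterministicTrafoformula} by conditioning on $\F^\Lambda_T$, i.e.\ by fixing a realisation $\omega_\Lambda$ of the time-change and working on the probability space $(\Omega_W, \F_W, \PP_W)$. First I would note that under $\HH$ the process $W$ is a Brownian motion (Proposition~\ref{W-BM}) and $\F^\Lambda_T \subseteq \Hh_0$, so all randomness coming from $\Lambda$ is frozen at time $0$. For $\PP_\Lambda$-a.e.\ $\omega_\Lambda$, the path $\lambda := \Lambda_\cdot(\omega_\Lambda)$ is a right-continuous increasing deterministic function in $\mathbb{S}$ with finitely many jumps and $\lambda_0 = 0$, hence meets the hypotheses of Lemma~\ref{DeterministicTrafoformula}, with $\gamma := \Gamma_\cdot(\omega_\Lambda)$ its generalised inverse. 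By Lemma~\ref{lem:measurability}\ref{F-W-adapted}, the process $s \mapsto \bar\nu(\lambda_{[0,s)}, W_{\lambda_{[0,s)}}, s)$ is $(\F^W_{\lambda_s})_{s\in[0,T]}$-adapted, and by Lemma~\ref{lem:measurability}\ref{left-continuity} it is left-continuous, so it coincides with its own left-limit process; together with the integrability condition \eqref{finite-second-moment1} restricted to $\omega_\Lambda$ (which holds for $\PP_\Lambda$-a.e.\ $\omega_\Lambda$ by Fubini, using independence of $W$ and $\Lambda$), Lemma~\ref{DeterministicTrafoformula} gives the identity \eqref{TrafoFormulaEq} $\PP_W$-a.s.\ for that fixed $\omega_\Lambda$.

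Next I would upgrade these "$\PP_W$-a.s., for a.e.\ $\omega_\Lambda$" statements to genuine $\PP$-a.s.\ statements on the product space. This is where the measurability bookkeeping has to be done carefully: I would check that the exceptional $\PP_W$-null set can be chosen measurably in $\omega_\Lambda$ — this follows because the maps $(\omega_\Lambda,\omega_W) \mapsto \int_0^t \nu_s\,\dM_s$ and $(\omega_\Lambda,\omega_W)\mapsto \int_0^{\Lambda_t}\nu(\Gamma_s)\,\dW_s$ are jointly measurable (the time-changed integral as an $\hat\HH$-integral, the right-hand side as an $\HH$-integral with $\F^\Lambda_T$-measurable, hence $\Hh_0$-measurable, integrand), so $\{(\omega_\Lambda,\omega_W) : \text{the two sides differ}\}$ is measurable and has $\PP_\Lambda$-a.e.\ section $\PP_W$-null, whence it is $\PP$-null by Fubini--Tonelli. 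Taking a countable dense set of $t$ and using right-continuity of both sides in $t$ then yields the identity simultaneously for all $t\in[0,T]$, a.s.

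Finally I would verify the integrability claim $\nu\circ\Gamma \in L(W,\HH)$, i.e.\ that $\ind_{[0,\Lambda_t)}(s)\,\nu(\Gamma_s)$ is $\HH$-predictable and square-integrable. Predictability: $s\mapsto \nu(\Gamma_s)$ is left-continuous and $\HH$-adapted (for each $s$, $\Gamma_s$ is $\F^\Lambda_T$-measurable, hence $\nu(\Gamma_s)$ is $\Hh_{\Lambda_{\Gamma_s}}$-measurable, and $\Lambda_{\Gamma_s}\geq s$ only up to the flat parts — more precisely $\nu(\Gamma_s) = \bar\nu(\Lambda_{[0,\Gamma_s)}, W_{\Lambda_{[0,\Gamma_s)}}, \Gamma_s)$ is $\Hh_s$-measurable since $\Lambda_{\Gamma_s-}\leq s$); a left-continuous adapted process is predictable. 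Square-integrability: $\EE\big[\int_0^R \ind_{[0,\Lambda_t)}(s)\,\nu(\Gamma_s)^2\,\ds\big] = \EE\big[\int_0^{\Lambda_t}\nu(\Gamma_s)^2\,\ds\big]$, and by the deterministic change of variable $s = \lambda_u$ conditionally on $\omega_\Lambda$ this equals $\EE\big[\int_0^t \nu_u^2\,\ud\Lambda_u\big] < \infty$ by \eqref{finite-second-moment1}. The main obstacle, as usual with this conditioning argument, is not the deterministic core — that is handed to us by Lemma~\ref{DeterministicTrafoformula} — but the passage from the fixed-$\omega_\Lambda$ statements to product-space a.s.\ statements, which requires establishing joint measurability of both stochastic integrals as functions of $(\omega_\Lambda,\omega_W)$ and checking that the $\HH$-conditional law of $W$ is $\PP_\Lambda$-a.s.\ just Wiener measure (so that the deterministic lemma genuinely applies $\PP_W$-a.s.\ for a.e.\ $\omega_\Lambda$).
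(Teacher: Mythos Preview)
Your approach is essentially the paper's: reduce to the deterministic Lemma~\ref{DeterministicTrafoformula} by freezing the $\omega_\Lambda$-coordinate, using the product structure $(\Omega_\Lambda\times\Omega_W,\PP_\Lambda\otimes\PP_W)$ and the independence of $W$ and $\Lambda$. The paper carries out your ``upgrade from $\PP_W$-a.s.\ for a.e.\ $\omega_\Lambda$ to $\PP$-a.s.'' not by arguing about measurable sections of the exceptional set, but by the cleaner device of testing both sides of~\eqref{TrafoFormulaEq} against indicators $\ind_{A_\Lambda\times B_W}$ of product sets (a $\pi$-system generating $\F$), writing each expectation as an iterated integral via Fubini, applying a Lebesgue change of variable $\omega_\Lambda\leadsto\lambda$ under $\PP_\Lambda\circ\Lambda^{-1}$, and invoking Lemma~\ref{DeterministicTrafoformula} on the inner $\PP_W$-integral. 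This sidesteps having to identify, for fixed $\omega_\Lambda$, the $\FF$-stochastic integral $\int_0^t\nu_s\,\dM_s$ on the product space with the $(\F^W_{\lambda_t})$-integral on $\Omega_W$ directly --- a point you flag but do not resolve. Your $\Hh_s$-measurability check for $\nu(\Gamma_s)$ matches the paper's: one shows $u<\Gamma_s\Rightarrow\Lambda_u\le s$, hence $W_{\Lambda_u}\ind_{[0,\Gamma_s)}(u)$ is $\Hh_s$-measurable for each $u$.
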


\begin{proof}
By 
Proposition \ref{M-F-martingale} and Lemma \ref{lem:measurability} \ref{F-adapted}, we know that the left hand-side of \eqref{TrafoFormulaEq} makes sense. We show that the right-hand side is well-defined too. For this purpose, we show that the integrand is $\HH$-adapted as $W$ is an $\HH$-Brownian motion (Proposition \ref{W-BM}).
As $\Lambda_t$ is $\Hh_0$-measurable, so is $\ind_{[0,\Lambda_t)}(s)$. It remains to show that $\nu(\Gamma_s)$ is $\Hh_s$-measurable for all $s\in[0,R]$. We have
 \begin{align*}
  \nu(\Gamma_s)=\bar\nu\left(\left(\Lambda_u\ind_{[0,\Gamma_s)}(u)\right)_{u\in[0,T]},\left(W_{\Lambda_u}\ind_{[0,\Gamma_s)}(u)\right)_{u\in[0,T]},\Gamma_s\right)\,.
 \end{align*}
We know that $\Lambda_u\ind_{[0,\Gamma_s)}(u)$, for every $u \in [0,T]$ and $\Gamma_s$ are $\Hh_0$-measurable. It suffices to show that $W_{\Lambda_u}\ind_{[0,\Gamma_s)}(u)$ is $\Hh_s$-measurable for every $u\in [0,T]$. Let $u<\Gamma_s$. Then there exists an $\varepsilon>0$ such that $u<\Gamma_s-\varepsilon$. Therefore, by the monotonicity of $\Lambda$,
 \begin{align*}
  \Lambda_u \leq  \Lambda_{\Gamma_s-\varepsilon}\leq  \lim_{\delta\downarrow 0}\Lambda_{\Gamma_s-\delta}=:\Lambda_{\Gamma_s-}\leq s\,.
 \end{align*}
The latter implies that $\Hh_{\Lambda_u}\subseteq\Hh_s$\,.
 We thus have that $W_{\Lambda_u}$ is $\Hh_s$-measurable for all $u<\Gamma_s$. It follows that $W_{\Lambda_u}\ind_{[0,\Gamma_s)}(u)$ is $\Hh_s$-measurable for every $u\in[0,T]$. This shows that the integrand at the right-hand side of \eqref{TrafoFormulaEq} is $\HH$-adapted.
 
 As both sides of equation \eqref{TrafoFormulaEq} are random variables on $(\Omega,\F,\mathbb{P})$, we need to show that, for each $H\in\F$,
 \begin{align}\label{equality-expectation}
  \EE\left[\int\limits_0^t\nu_s\,\dM_s\ind_H\right]=\EE\left[\int\limits_0^{R}\ind_{[0,\Lambda_t)}(s)\nu(\Gamma_s)\,\dW_s\ind_H\right],
 \end{align}
because we then have $\int_0^t\nu_s\dM_s=\EE\left[\int_0^t\nu_s\,\dM_s\mid \F\right]=\int_0^{R}\ind_{[[0,\Lambda_t))}(s)\nu(\Gamma_s)\dW_s$. 
 But as $\{A_\Lambda\times B_W, A_\Lambda\subset\Omega_\Lambda, B_W\subset\Omega_W\}\cup\N$ is a $\pi$-system that includes an exhausting sequence for $\Omega$ and generates $\F$, by \cite[Theorem 23.9]{schilling2017measures} it actually suffices to show \eqref{equality-expectation} for all $H\in\{A_\Lambda\times B_W, A_\Lambda\subset\Omega_\Lambda, B_W\subset\Omega_W\}\cup\N$. For $H\in\N$, \eqref{equality-expectation} clearly holds (both sides equal $0$), hence w.l.o.g., we consider $H=A_\Lambda\times B_W$ for some $A_\Lambda\subset\Omega_\Lambda$, $B_W\subset\Omega_W$. 
 Define $\tilde\Lambda:[0,T]\times\Omega_\Lambda\rightarrow[0,R]$ by $\tilde\Lambda_t(\omega_\Lambda)=\Lambda_t(\omega_\Lambda)$ and $\tilde W:[0,R]\times\Omega_W\rightarrow\IR$ by $\tilde W_t(\omega_W)=W_t(\omega_W)$.
 Then
 \begin{align}\label{eq:change-of-variable}
  &\EE\left[\int\limits_0^t\nu_s\,\dM_s\ind_H\right]
  \nonumber\\
  &=\int\limits_{A_\Lambda}\int\limits_{B_W}\int\limits_0^t\bar\nu(\Lambda_{[0,s)},M_{[0,s)},s)\,\dM_s\,\text{d}\PP_W\,\text{d}\PP_\Lambda\nonumber\\
  &=\int\limits_{A_\Lambda}\int\limits_{B_W}\int\limits_0^t\bar\nu(\tilde\Lambda(\omega_\Lambda)_{[0,s)},\tilde W_{\tilde\Lambda(\omega_\Lambda)_{[0,s)}}(\omega_W),s)\,\text{d}\tilde W_{\tilde\Lambda_s(\omega_\Lambda)}(\omega_W)\PP_W(\domega_W)\PP_\Lambda(\domega_\Lambda)\nonumber\\
  &=\int\limits_{\tilde\Lambda(A_\Lambda)}\int\limits_{B_W}\int\limits_0^t\bar\nu(\lambda_{[0,s)},\tilde W_{\lambda_{[0,s)}}(\omega_W),s)\,\text{d}\tilde W_{\lambda_s}(\omega_W)\PP_W(\domega_W)(\PP_\Lambda\circ\Lambda^{-1})(\dlambda),
 \end{align}
 where we used a change of variable formula for Lebesgue-integrals. Using similar computations, we get 
 \begin{align*}
& \EE\left[\int_0^T|\nu_s|^2\,\text{d}\Lambda_s\right] \\
&=\int\limits_{\Omega_\Lambda}\int\limits_{\Omega_W}\int\limits_0^T\left|\bar\nu(\Lambda_{[0,s)},M_{[0,s)},s)\right|^2\,\text{d}\Lambda_s\,\text{d}\PP_W\,\text{d}\PP_\Lambda\\
&= \int\limits_{\tilde{\Lambda}(\Omega_\Lambda)} \EE_{\mathbb{P}_W} \left[\int\limits_0^T \left|\bar\nu(\lambda_{[0,s)},\tilde W_{\lambda_{[0,s)}},s)\right|^2\, \dlambda_s\right] (\PP_\Lambda\circ\Lambda^{-1})(\dlambda)\, .
 \end{align*}
 From \eqref{finite-second-moment1}, it follows that $\EE_{\mathbb{P}_W} [\int_0^T |\bar\nu(\lambda_{[0,s)},\tilde W_{\lambda_{[0,s)}},s)|^2\, \dlambda_s]  <\infty$.
Moreover, from Lemma \ref{lem:measurability}, we know that $\left(\bar\nu(\lambda_{[0,t)},\tilde{W}_{\lambda_{[0,t)}},t)\right)_{t\in[0,T]}$ is $(\F^W_{\lambda_t})_{t\in[0,T]}$-adapted.
Hence applying Lemma 
 \ref{DeterministicTrafoformula} on the inner integral in \eqref{eq:change-of-variable} and the change of variable formula for Lebesgue-integrals, we get
 \begin{align*}
  &\EE\left[\int\limits_0^t\nu_s\dM_s\ind_H\right]\\
  &=\int\limits_{\tilde\Lambda(A_\Lambda)}\int\limits_{B_W}\int\limits_0^{\lambda_t}\bar\nu(\lambda_{[0,\gamma_s)},\tilde W_{\lambda_{[0,\gamma_s)}}(\omega_W),\gamma_s)\,\text{d}\tilde W_{s}(\omega_W)\PP_W(\domega_W)(\PP_\Lambda\circ \Gamma)(\dlambda)\\
  &=\int\limits_{A_\Lambda}\int\limits_{B_W}\int\limits_0^{\Lambda_t}\bar\nu(\Lambda_{[0,\Gamma_s)},W_{\Lambda_{[0,\Gamma_s)}},\Gamma_s)\,\text{d}W_{s}\,\text{d}\PP_W\,\text{d}\PP_\Lambda\\
  &=\EE\left[\int\limits_0^{R}\ind_{[0,\Lambda_t)}(s)\nu(\Gamma_s)\,\dW_s\ind_H\right]\,
 \end{align*}
 and the statement follows.
\end{proof}

%
%
%
The results we presented in the latter theorem are written in general terms in the sense that $\nu$ depends on the whole path of $\Lambda$ and $M$. We remark that these results also hold in case $\nu$ depends only on $\Lambda_u$ or $M_u$ at $u \in [0,T]$.

In the following theorem, we write the time-change stochastic integral w.r.t.~the Brownian motion $W$ in terms of the stochastic integral w.r.t~the time-changed Brownian motion. This is a delicate procedure, as it fails without further conditions.
\begin{theorem}\label{TrafoFormulaBack}\hspace{10cm}\\
 Let $\Lambda \in \mathbb{S}$, a.s., and $\tilde\nu \in L(W, \mathbb{H})$. Assume $\tilde{\nu}$ is $\Lambda$-adapted in the sense of Definition \ref{def:time-change} \ref{adapted-to-time-change}. Then $\tilde\nu\circ\Lambda \in L(M, \hat{\mathbb{H}})$ and it holds
 \begin{align}\label{TrafoFormulaBackEq}
\int\limits_0^{\Lambda_t}\tilde\nu_{s}\,\dW_s=\int\limits_0^t\tilde\nu\circ\Lambda_{s}\,\dM_s, \quad \text{a.s.} \quad \forall \, 0\leq t\leq T\,.
 \end{align}
\end{theorem}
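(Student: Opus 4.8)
The plan is to mirror the proof of Theorem~\ref{TrafoFormula}: first check that the right-hand side of \eqref{TrafoFormulaBackEq} is well defined, then reduce the identity to the case of a \emph{deterministic} time-change by conditioning on $\F^\Lambda_T$, and finally treat that deterministic case by splitting $[0,T]$ at the finitely many jump times of the time-change, where the $\Lambda$-adaptedness of $\tilde\nu$ does the essential work. For the well-definedness: since $\tilde\nu$ is $\HH$-predictable and $\Lambda$ is a right-continuous time-change whose values $\Lambda_t$ are $\Hh_0$-measurable (see \eqref{stopping-time}), the composition $\tilde\nu\circ\Lambda$ is $\hat\HH$-predictable by the standard composition results for time-changed filtrations (cf.\ \cite[Ch.~X]{jacod2006calcul}, \cite{kobayashi2011stochastic}). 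For the integrability, $M$ is an $\hat\HH$-martingale with $\langle M\rangle_t=\langle W\rangle_{\Lambda_t}=\Lambda_t$ (since $\langle W\rangle_r=r$ and $\Lambda$ is an $\HH$-time-change). Decomposing the Lebesgue--Stieltjes measure $\ud\Lambda$ on $(0,t]$ into its continuous part and the atoms at the jump times of $\Lambda$, and using that $\tilde\nu$ is $\Lambda$-adapted — hence constant on each interval $[\Lambda_{u-},\Lambda_u]$ — the change-of-variable formula for Stieltjes integrals gives
\[
\int_0^t|\tilde\nu_{\Lambda_s}|^2\,\ud\langle M\rangle_s=\int_0^t|\tilde\nu_{\Lambda_s}|^2\,\ud\Lambda_s=\int_0^{\Lambda_t}|\tilde\nu_u|^2\,\ud u<\infty\qquad\text{a.s.},
\]
the finiteness because $\tilde\nu\in L(W,\HH)$. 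Hence $\tilde\nu\circ\Lambda\in L(M,\hat\HH)$.

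For the reduction step: both sides of \eqref{TrafoFormulaBackEq} are $\F=\Hh_R$-measurable random variables on the product space $(\Omega,\F,\PP)$, so, exactly as in the proof of Theorem~\ref{TrafoFormula}, it is enough to show their expectations against $\ind_H$ agree for $H=A_\Lambda\times B_W$ from the generating $\pi$-system. Disintegrating over $\omega_\Lambda$ (Fubini, after a routine localisation to secure integrability), and noting that freezing $\omega_\Lambda$ turns the $\HH$-predictability of $\tilde\nu$ into $\FF^W$-predictability and its $\Lambda$-adaptedness into $\lambda$-adaptedness for $\PP_\Lambda$-a.e.\ $\omega_\Lambda$, the claim reduces to the deterministic statement: for every deterministic $\lambda\in\mathbb{S}$ and every $\FF^W$-predictable, $\lambda$-adapted $\tilde\nu$ with $\int_0^R|\tilde\nu_s|^2\,\ud s<\infty$ a.s.,
\[
\int_0^{\lambda_t}\tilde\nu_s\,\dW_s=\int_0^t\tilde\nu_{\lambda_s}\,\ud W_{\lambda_s}\qquad\text{a.s.},\ \forall\,t\in[0,T],
\]
the integrals being w.r.t.\ the $\FF^W$-Brownian motion $W$ and the $\FF^W_\lambda$-martingale $W_\lambda$ respectively.

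For the deterministic case: let $0=t_0<t_1<\dots<t_n\le T$ list all jump times of $\lambda$. On each interval $[t_{k-1},t_k)$ the function $\lambda$ is continuous, so there $W_\lambda$ is a \emph{continuous} time-change of $W$, vacuously $\lambda$-adapted on that interval, and the continuous change-of-variable formula (the continuous case of Lemma~\ref{lem:change-of-variable} ii), cf.\ \cite{revuz2013continuous}) yields $\int_{[t_{k-1},t_k)}\tilde\nu_{\lambda_s}\,\ud W_{\lambda_s}=\int_{[\lambda_{t_{k-1}},\lambda_{t_k-})}\tilde\nu_u\,\dW_u$. At a jump time $t_k$, $M=W_\lambda$ jumps by $\Delta M_{t_k}=W_{\lambda_{t_k}}-W_{\lambda_{t_k-}}$, and since $\tilde\nu$ is $\lambda$-adapted it is constant, equal to $\tilde\nu_{\lambda_{t_k}}$, on $[\lambda_{t_k-},\lambda_{t_k}]$; hence the corresponding jump contribution to the right-hand integral is $\tilde\nu_{\lambda_{t_k}}\Delta M_{t_k}=\int_{(\lambda_{t_k-},\lambda_{t_k}]}\tilde\nu_u\,\dW_u$ (this is also why the left limit appearing in the analogue Lemma~\ref{DeterministicTrafoformula} is immaterial here). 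Summing the interval and jump contributions up to time $t$, and observing that the pieces $[\lambda_{t_{k-1}},\lambda_{t_k-})$ and $[\lambda_{t_k-},\lambda_{t_k}]$ tile $[0,\lambda_t]$ up to a $\ud u$-null set, the right-hand side equals $\int_0^{\lambda_t}\tilde\nu_u\,\dW_u$. Undoing the disintegration then yields \eqref{TrafoFormulaBackEq}.

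The main obstacle is the deterministic case, and within it the bookkeeping of the ranges of $\lambda$ that its jumps skip over: it is precisely the $\Lambda$-adaptedness hypothesis that forces $\tilde\nu$ to be constant across each such gap, making the jump of $\int\tilde\nu\,\ud W_\lambda$ coincide with $\int_{\mathrm{gap}}\tilde\nu\,\ud W$ — without it the identity fails, as already flagged after Lemma~\ref{lem:change-of-variable}. One must also be careful to use the right (predictable) versions of $\tilde\nu$ when splitting the stochastic integral w.r.t.\ $W_\lambda$ into its continuous part plus the jumps at the (deterministic, hence predictable) times $t_k$. The conditioning in the reduction step is routine once the $\pi$-system/Fubini argument of Theorem~\ref{TrafoFormula} is reused, modulo the usual care with null sets and with localising for the Fubini interchange.
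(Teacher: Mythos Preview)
Your argument is correct in its essential mechanism, but it takes a longer route than the paper. You first reduce to a deterministic time-change via the $\pi$-system/Fubini disintegration (as in Theorem~\ref{TrafoFormula}), and only then split at the jump times of $\lambda$. The paper skips the reduction entirely: it works directly in the stochastic setting, letting $\tau_1,\dots,\tau_N$ be the (random) jump times of $\Lambda$, which are $\HH$-stopping times since $\F^\Lambda_T\subseteq\Hh_0$. One then decomposes
\[
\int_0^{\Lambda_t}\tilde\nu_s\,\dW_s=\sum_{i=1}^N\Bigl(\int_{\Lambda_{\tau_{i-1}\wedge t}}^{\Lambda_{(\tau_i\wedge t)-}}\tilde\nu_s\,\dW_s+\int_{\Lambda_{(\tau_i\wedge t)-}}^{\Lambda_{\tau_i\wedge t}}\tilde\nu_s\,\dW_s\Bigr),
\]
applies Lemma~\ref{lem:change-of-variable}~ii) \emph{directly} on each continuous piece $[\tau_{i-1}\wedge t,\tau_i\wedge t)$ (where $W$ is trivially $\Lambda$-adapted), and handles the gap integrals exactly as you do, by pulling out the constant value of $\tilde\nu$ and recognising the jump $M_{\tau_i\wedge t}-M_{(\tau_i\wedge t)-}$.

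What the paper's route buys: it avoids the disintegration step altogether, which in your version carries a real subtlety. You assert that ``freezing $\omega_\Lambda$ turns the $\HH$-predictability of $\tilde\nu$ into $\FF^W$-predictability'', but unlike in Theorem~\ref{TrafoFormula}, here $\tilde\nu$ is \emph{not} assumed to have the functional form of Assumption~\ref{assumption-2}; it is a general $\HH$-predictable process. Establishing that such a process factorises nicely through the product structure requires a monotone-class argument you have not supplied, and the localisation you wave at (``after a routine localisation to secure integrability'') interacts with this. None of this is fatal, but the paper's direct argument sidesteps it cleanly. What your route buys: conceptual parallelism with Theorem~\ref{TrafoFormula}, which is pedagogically pleasant but not technically needed here, since Lemma~\ref{lem:change-of-variable}~ii) is already formulated for stochastic $\Lambda$.
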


 \begin{proof}
  Define $\tau_0:=0$ and let $\tau_i$, $i=1,2,\ldots, N$ be the jump times of $\Lambda$. Observe that these are all $\HH$-stopping times. Therefore, we can write
  \begin{align}\label{eq:sum}
   \int\limits_0^{\Lambda_t}\tilde\nu_{s}\,\dW_s=\sum_{i=1}^{N}\left(\int\limits_{\Lambda_{\tau_{i-1}\wedge t}}^{\Lambda_{(\tau_{i}\wedge t)-}}\tilde\nu_s\,\dW_s+\int\limits_{\Lambda_{(\tau_{i}\wedge t)-}}^{\Lambda_{\tau_{i}\wedge t}}\tilde\nu_s\,\dW_s\right)\,.
  \end{align}
 Now we consider the two terms on the right hand-side of \eqref{eq:sum} separately. As $\Lambda$ is continuous on $[\tau_{i-1}\wedge t,\tau_{i}\wedge t)$, 
 then applying Lemma \ref{lem:change-of-variable}, equation \eqref{eq:change-of-variable2}, we deduce
  \begin{align}\label{lambda-continuous}
  \int\limits_{\Lambda_{\tau_{i-1}\wedge t}}^{\Lambda_{(\tau_{i}\wedge t)-}}\tilde\nu_s\,\dW_s=\int\limits_{(\tau_{i-1}\wedge t,\tau_{i}\wedge t)}\tilde\nu\circ\Lambda_s\,\dW_{\Lambda_s}=\int\limits_{(\tau_{i-1}\wedge t,\tau_{i}\wedge t)}\tilde\nu\circ\Lambda_s\,\dM_s\,.
 \end{align}
 On the other hand, because $\tilde\nu$ is constant on $[\Lambda_{(\tau_{i}\wedge t)-},\Lambda_{\tau_{i}\wedge t}]\subseteq[\Lambda_{\tau_{i}-},\Lambda_{\tau_{i}}]$, it holds that
 $$\tilde\nu_s\equiv\tilde\nu\circ\Lambda_{(\tau_{i}\wedge t)-}\equiv\tilde\nu\circ\Lambda_{\tau_{i}\wedge t}\,.$$ 
 The latter is $\Hh_{\Lambda_{(\tau_{i}\wedge t)-}}$-measurable. Hence $\tilde{\nu}$ in the second integral term in the right-hand side of \eqref{eq:sum} can be pulled out of the integral and we get
 \begin{align}\label{nu-lambda-adpated}
  \int\limits_{\Lambda_{(\tau_{i}\wedge t)-}}^{\Lambda_{\tau_{i}\wedge t}}\tilde\nu_s\,\dW_s&=\tilde\nu\circ\Lambda_{(\tau_{i}\wedge t)-}\cdot\int\limits_{\Lambda_{(\tau_{i}\wedge t)-}}  ^{\Lambda_{\tau_{i}\wedge t}}\dW_s\nonumber\\
  &=\tilde\nu\circ\Lambda_{(\tau_{i}\wedge t)-}\cdot\left(W_{\Lambda_{\tau_{i}\wedge t}}-W_{\Lambda_{(\tau_{i}\wedge t)-}}\right)\nonumber\\
  &=\tilde\nu\circ\Lambda_{\tau_{i}\wedge t}\cdot\left(M_{\tau_{i}\wedge t}-M_{(\tau_{i}\wedge t)-}\right)\nonumber\\
  &=\tilde\nu\circ\Lambda_{\tau_{i}\wedge t}\cdot\int\limits_{[\tau_{i}\wedge t]}\dM_s\nonumber\\
  &=\int\limits_{[\tau_{i}\wedge t]}\tilde\nu\circ\Lambda_s\,\dM_s\,.
 \end{align}
 Summing up \eqref{lambda-continuous} and \eqref{nu-lambda-adpated} yields the statement of the theorem.
 \end{proof}

Note that we assume that $\Lambda$ has only finitely many jumps as we use Lemma 3.1 from  \cite[Lemma 2.2]{kusuoka2010malliavin} where this assumption is needed.

\section{Application to a utility maximisation problem}\label{section:optimal-control}\label{S5}
In this section we aim at applying our change of variable formulas to solve a utility maximisation problem from terminal wealth where the time-change is modelled by a \textit{strictly} increasing process $\Lambda\in \mathbb{S}$, \eqref{eq:setS}. Let us introduce the subset
\begin{equation}\label{eq:setS+}
\mathbb{S}^+=\{\Lambda \mid \Lambda \in \mathbb{S} \mbox{ and strictly increasing}\}.
\end{equation}
Recall that in this case the process $\Gamma$ in Definition \ref{def:time-change} is continuous.
\subsection{The optimisation problem}
We consider a market model that consists of a bond paying zero interest rate and a stock whose value process is given by the $\FF$-semimartingale $S$ with the decomposition 
\begin{align}\label{semimartingale}
S_t=S_0+M_t+A_t\,, \qquad 0 \leq t\leq \bar{T}\,, 
\end{align}
where $S_0$ is a constant, $M$ is as in \eqref{time-changed-bm} and $A$ is such that $A \circ \Gamma$ is $\mathbb{H}$-predictable.
We assume there exists a probability measure $\mathbb{Q}\sim\PP$ such that $S$ is a local $\mathbb{Q}$-martingale. 
We define the space $\Theta$ by
 $$\Theta:= \left\{ \theta \in L(S, \mathbb{F})\,  \left|\, \EE\left[\int\limits_0^T\theta^2_{s-}\, \mathrm{d} \Lambda_s\right] <\infty \right\}\right.\,.$$
\par
A self-financing strategy $\nu \in \Theta$ starting at time $t$ with the starting value $x\geq 0$ has at time $t_1$\,, the value
\begin{align*}
 V_{t_1}^{t,x}(\nu)=x+\int\limits_t^{t_1}\nu_{u-}\, \dS_u\,, \qquad  0\leq t\leq t_1 \leq T\,.
\end{align*}
The component $\nu$ of the trading strategy corresponds to the amount of money invested in the asset $S$.
 The set of admissible strategies that we want to allow for shall be given in the following definition.
\begin{definition}[\textbf{admissible trading strategies $\A_{t;\FF}$}]\label{AdmissibleStrategies}
 The set of admissible trading strategies $\A_{t;\FF}$ consists of all processes $(\nu_s)_{t\leq s\leq T}$ fulfilling Assumption \ref{assumption-2}, and such that
 \begin{enumerate}
\item $\nu \in \Theta$,
 \item  the strategy is such that the discounted wealth process 
 \begin{equation}\label{value-process}
 V_T^{t,x}(\nu) = x+ \int\limits_t^T \nu_u \, \ud S_u\,
 \end{equation}
 is non-negative.
  \end{enumerate}

\end{definition}
%
The goal is to find an admissible strategy $\nu^* \in \A_{t;\FF}$ under which the conditional expected utility of the terminal wealth 
\begin{align*}
 J^{t,x}(\nu):=\EE\left[U\left(V_T^{t,x}(\nu)\right)  \mid  \F_t\right]\,,
\end{align*}
is maximised for $t \in [0,T]$\,. Thus we want to find $\nu^*$ such that
\begin{align}\label{OptimizationProblem}
 J^{t,x}(\nu^*)=\esssup_{\nu\in\A_{t;\FF}}J^{t,x}(\nu)\,, \quad t \in [0,T]\,.
\end{align}

\par \medskip
The problem of maximising expected utility from terminal wealth is a classical problem in mathematical finance (we refer, e.g.,~to \cite{karatzas1998methods} for an overview). Different approaches are used in the literature to solve such a problem. One approach based on the the theory of partial differential equations is studied, e.g.,~in \cite{merton1969lifetime, merton1975optimum, benth2003merton, framstad2001optimal} in a Markovian setting. Other approaches based on duality characterisations of portfolios or the theory of quadratic backward stochastic differential equations
 are considered, e.g.,~in \cite{cox1991variational, karatzas1987optimal, karatzas1991martingale, he1991consumption, kramkov1999asymptotic, hu2005utility, morlais2009quadratic, morlais2010new, Oksendal-Sulem} in a continuous and jump setting. 
The case where the price process is modelled by a time-changed L\'evy process with the time-change being absolutely continuous w.r.t.~the Lebesgue-measure is considered in \cite{di2014bsdes, kallsen2010utility}.
 
 \par \medskip
Hereafter, we tackle the problem \eqref{OptimizationProblem} for price processes modelled by a semimartingale $S$ whose decomposition is as described in \eqref{semimartingale}. 
Our approach is to first take the conditioning on the sigma-algebra $\Hat{\Hh}_t$ introduced in \eqref{TCFhat}-\eqref{TCFLambda} 
and then use the change of variable formula in Theorem \ref{TrafoFormulaEq} in order to translate the integral w.r.t.~the martingale $M$ into an integral w.r.t.~the Brownian motion $W$ and solve the problem in this setup. Afterwards we will relate the solution under the enlarged filtration to the solution under the original one. 



 \subsection{The optimisation problem under the enlarged filtration}
 Instead of optimising under the filtration $\FF$, let us first suppose we are given the information in $(\hat{\Hh}_{_t})_{0 \leq t \leq T}$. Then the optimisation objective becomes 
\begin{align}\label{EnlargedObjective}
 J_{\HH}^{t,x}(\nu):=\mathbb{E}\left[U(V_T^{t,x}(\nu))\mid \Hh_{\Lambda_t}\right]\,, \qquad t\in [0,T]\,,
\end{align}
i.e., we want to find $\hat\nu \in\A_{t;\FF}$ such that
\begin{align}\label{EnlargedOptimizationProblem}
 J_{\HH}^{t,x}(\hat\nu)=\esssup_{\nu\in\A_{t;\FF}}J_{\HH}^{t,x}(\nu)\,, \qquad t\in [0,T]\,.
\end{align}
Using Theorem \ref{TrafoFormula}, and the change of variable formula for the Lebesgue-measure, we derive
\begin{align}\label{tra-to-continous}
U(V_T^{t,x}(\nu)) &= U\left(x+\int\limits_t^T\nu_u\,\dS_u\right)\nonumber \\
&=U\left(x+\int\limits_t^T\nu_u\,\dM_u+\int\limits_t^T\nu_u \,\ud A_u\ \right)\nonumber \\
&= U\left(x+\int\limits_{\Lambda_t}^{\Lambda_T}\nu_{\Gamma_u}\,\dW_u+\int\limits_{\Lambda_t}^{\Lambda_T}\nu_{\Gamma_u}\,\text{d}(A\circ \Gamma)_u\right).
\end{align}
%

In the sequel we introduce a new set of admissible strategies which will allow us to investigate the optimal problem in the continuous setting of \eqref{tra-to-continous}.
\begin{definition}[\textbf{admissible trading strategies $\tilde{\A}_{\Lambda_t;\HH}$}]
Let $\ud X_r = \ud W_r + \text{d}(A\circ \Gamma)_r$, $0\leq r\leq R$. The set of admissible trading strategies $\tilde{\A}_{\Lambda_t;\HH}$ consists of all  c\`agl\`ad processes $(\tilde{\nu}_r)_{\Lambda_t\leq r\leq R} \in L(X, \mathbb{H})$ such that 
 \begin{enumerate}
\item $\tilde\nu$ is $\Lambda$-adapted,
 \item the discounted wealth process 
 \begin{equation}\label{value-process1}
 V_R^{\Lambda_t,x}(\tilde{\nu}) = x+ \int\limits_{\Lambda_t}^R \tilde{\nu}_u \, \ud X_u\,, \qquad 0\leq t\leq T
 \end{equation}
 is non-negative.
  \end{enumerate}
\end{definition}

\begin{proposition}\label{prop:problem-with-inequality} Let $S$ be an $\mathbb{F}$-semimartingale with decomposition \eqref{semimartingale}. 
Let 
$\Lambda \in \mathbb{S}^+$ a.s., with $\mathbb{S}^+$ given in \eqref{eq:setS+}.  Then, for \eqref{EnlargedOptimizationProblem}, it holds that
\begin{align}
 J_{\HH}^{t,x}(\hat{\nu})
 &\leq  \esssup_{\tilde{\nu}\in\tilde\A_{\Lambda_t;\HH}}\mathbb{E}\left[U\left(x+\int\limits_{\Lambda_t}^{\Lambda_T}\tilde\nu_u\,\dW_u+\int\limits_{\Lambda_t}^{\Lambda_T}\tilde\nu_u \,\ud(A\circ \Gamma)_u\right)\mid \Hh_{\Lambda_t}\right]\label{final-problem2}\,.
\end{align}
\end{proposition}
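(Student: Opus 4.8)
The plan is to transport every admissible $\FF$-strategy on the left of \eqref{final-problem2} into the continuous picture via the change of variable formula and to recognise the transported integrand as an element of $\tilde\A_{\Lambda_t;\HH}$. By \eqref{EnlargedOptimizationProblem} we have $J_{\HH}^{t,x}(\hat\nu)=\esssup_{\nu\in\A_{t;\FF}}J_{\HH}^{t,x}(\nu)$, so it suffices to show that for each fixed $\nu\in\A_{t;\FF}$ the time-changed process
\begin{align*}
\tilde\nu_r:=\nu_{\Gamma_r}\,\ind_{[\Lambda_t,\Lambda_T]}(r)\,,\qquad r\in[\Lambda_t,R]\,,
\end{align*}
belongs to $\tilde\A_{\Lambda_t;\HH}$ and satisfies $J_{\HH}^{t,x}(\nu)=\EE[U(x+\int_{\Lambda_t}^{\Lambda_T}\tilde\nu_u\,\dW_u+\int_{\Lambda_t}^{\Lambda_T}\tilde\nu_u\,\ud(A\circ\Gamma)_u)\mid\Hh_{\Lambda_t}]$; the latter identity is exactly \eqref{tra-to-continous} read together with the definition \eqref{EnlargedObjective} of $J_\HH^{t,x}$. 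Once $\tilde\nu\in\tilde\A_{\Lambda_t;\HH}$ is known, this right-hand side is bounded above by $\esssup_{\tilde\nu\in\tilde\A_{\Lambda_t;\HH}}\EE[\dots]$, and passing to $\esssup_{\nu\in\A_{t;\FF}}$ on the left (the right no longer depending on $\nu$) produces \eqref{final-problem2}.

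First I would justify that \eqref{tra-to-continous} applies to $\nu\in\A_{t;\FF}$. Such a $\nu$ satisfies Assumption \ref{assumption-2} and is left-continuous by Lemma \ref{lem:measurability} \ref{left-continuity}, so $\nu_{s-}=\nu_s$; hence $\nu\in\Theta$ gives the moment bound \eqref{finite-second-moment1}, as well as $\nu\in L(M,\FF)$ and, since $A=S-S_0-M$, also $\nu\in L(A,\FF)$. Therefore Theorem \ref{TrafoFormula} rewrites $\int_t^T\nu_u\,\dM_u$ as $\int_{\Lambda_t}^{\Lambda_T}\nu_{\Gamma_u}\,\dW_u$, while the change of variable for Lebesgue--Stieltjes integrals — valid because $\Lambda\in\mathbb{S}^+$ makes $\Gamma$ continuous — rewrites $\int_t^T\nu_u\,\ud A_u$ as $\int_{\Lambda_t}^{\Lambda_T}\nu_{\Gamma_u}\,\ud(A\circ\Gamma)_u$; together these are \eqref{tra-to-continous}.

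\textbf{The main obstacle is verifying $\tilde\nu\in\tilde\A_{\Lambda_t;\HH}$}, which I would do property by property. \emph{Integrability in $L(X,\HH)$, $\ud X=\ud W+\ud(A\circ\Gamma)$}: Theorem \ref{TrafoFormula} already yields $\nu\circ\Gamma\in L(W,\HH)$, the Lebesgue--Stieltjes change of variable above gives integrability of $\nu\circ\Gamma$ against $A\circ\Gamma$, and $A\circ\Gamma$ is $\HH$-predictable by the standing hypothesis on \eqref{semimartingale}. \emph{C\`agl\`ad paths and $\HH$-predictability}: $\nu$ is left-continuous with right limits, $\Gamma$ is continuous, and $\ind_{[\Lambda_t,\Lambda_T]}$ is left-continuous with $\Hh_0$-measurable endpoints, while the $\Hh_s$-measurability of $\nu(\Gamma_s)$ was already observed inside the proof of Theorem \ref{TrafoFormula}. \emph{$\Lambda$-adaptedness} — the step where strict monotonicity of $\Lambda$ is essential: for $s\in[0,T]$, either $\Lambda$ is continuous at $s$, so $[\Lambda_{s-},\Lambda_s]$ is a singleton, or $\Lambda$ jumps at $s$, in which case $\Gamma_r=s$ for every $r\in[\Lambda_{s-},\Lambda_s]$ — for $r<\Lambda_s$ because $\Lambda_u\le\Lambda_{s-}\le r<\Lambda_s$ whenever $u<s$, and for $r=\Lambda_s$ because $\Gamma\circ\Lambda_s=s$ by \eqref{general-inverse} — so $\tilde\nu$ is constant there. \emph{Non-negativity of the wealth}: by \eqref{tra-to-continous} and $\tilde\nu\equiv0$ on $(\Lambda_T,R]$, $x+\int_{\Lambda_t}^R\tilde\nu_u\,\ud X_u=x+\int_{\Lambda_t}^{\Lambda_T}\nu_{\Gamma_u}\,\ud X_u=V_T^{t,x}(\nu)\ge0$ by Definition \ref{AdmissibleStrategies} (ii).

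With $\tilde\nu\in\tilde\A_{\Lambda_t;\HH}$ in hand, combining \eqref{tra-to-continous} with the definition of $J_\HH^{t,x}$ gives, for every $\nu\in\A_{t;\FF}$, that $J_{\HH}^{t,x}(\nu)\le\esssup_{\tilde\nu\in\tilde\A_{\Lambda_t;\HH}}\EE[U(x+\int_{\Lambda_t}^{\Lambda_T}\tilde\nu_u\,\dW_u+\int_{\Lambda_t}^{\Lambda_T}\tilde\nu_u\,\ud(A\circ\Gamma)_u)\mid\Hh_{\Lambda_t}]$; taking $\esssup_{\nu\in\A_{t;\FF}}$ on the left and invoking \eqref{EnlargedOptimizationProblem} yields \eqref{final-problem2}. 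I expect everything here except the $\Lambda$-adaptedness of $\nu\circ\Gamma$ (and the attendant need for $\Lambda$ to be strictly increasing rather than merely increasing) to be routine bookkeeping, since the delicate integrability and measurability of $\nu\circ\Gamma$ are already supplied by Theorem \ref{TrafoFormula} and the remaining conditions follow directly from the definition of $\A_{t;\FF}$.
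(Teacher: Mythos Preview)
Your proposal is correct and follows essentially the same approach as the paper: show that for each $\nu\in\A_{t;\FF}$ the time-changed process $\tilde\nu=\nu\circ\Gamma$ lies in $\tilde\A_{\Lambda_t;\HH}$, then invoke \eqref{tra-to-continous} and take the essential supremum. The only notable difference is in the verification of $\Lambda$-adaptedness: you argue directly that $\Gamma$ is constant on each interval $[\Lambda_{s-},\Lambda_s]$, whereas the paper observes more succinctly that $\tilde\nu\circ\Lambda=\tilde\nu\circ\Lambda_-=\nu$ via \eqref{general-inverse}; these are equivalent, and your argument makes explicit why strict monotonicity of $\Lambda$ is needed.
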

\begin{proof}
As $\Gamma$ is continuous, it is obvious that $\tilde{\nu} = \nu\circ\Gamma$ is c\`agl\`ad for each $\nu \in \A_{t;\FF}$\,. 
 Moreover, from \eqref{general-inverse}, we know that $\tilde{\nu} \circ \Lambda = \tilde{\nu} \circ \Lambda_- = \nu$. Hence $\tilde{\nu}$ is $\Lambda$-adapted.
Lemma \ref{lem:change-of-variable} and Theorem \ref{TrafoFormula} yield $\tilde{\nu} \in L(X,\mathbb{H})$. The non-negativity of $V_{R}^{\Lambda_t,x}(\tilde{\nu})$ in \eqref{value-process1} follows from the non-negativity of $V_{T}^{t,x}(\nu)$ in \eqref{value-process}. Therefore for $\nu \in \A_{t;\FF}$, we have that $\tilde{\nu} \in \tilde{\A}_{\Lambda_t;\HH}$ and the statement of the proposition follows. 
\end{proof}

Unfortunately, \eqref{final-problem2} does not hold in general with equality, i.e., optimising over $\tilde\A_{\Lambda_t;\HH}$ in the time-changed framework yields an upper boundary for the solution to the original problem. The reason for this is that for $\tilde\nu$ being $\HH$-adapted, in general $\tilde\nu\circ\Lambda_-$ is not $\FF$-adapted, so $\tilde\nu\circ\Lambda_-$ will not {\it in general} be an admissible strategy. This becomes clear when one keeps in mind that the filtration $\HH$ has all information about the whole path of $\Lambda$ from the very beginning, so most of the ``admissible strategies'' in the set $\tilde\A_{\Lambda_t;\HH}$ would have future information. 

The way we proceed is to impose some conditions on the drift of our model. This condition will allow to construct, for some chosen utility functions, a strategy for the right-hand side of \eqref{final-problem2} and for which the equality will hold.

We model the stock price by an $\FF$-semimartingale as in \eqref{semimartingale}
where we impose a special form to its finite variation part.  
\begin{assumption}\label{assum:assumption-A}
Let $\Gamma$ be as in  Definition \ref{def:time-change}. Assume $A$ is a finite-variation process satisfying
$$ (A \circ \Gamma)_r = \int_0^r \tilde{\theta}_u \, \text{d}u\,, \qquad 0\leq r\leq R\,,$$

for $\tilde\theta$ being an integrable c\`agl\`ad $\Lambda$- and $\mathbb{H}$-adapted process.
\end{assumption}
Notice that Assumption \ref{assum:assumption-A} implies that $A\circ \Gamma$ is $\mathbb{H}$-predictable. Indeed as $\tilde\theta$ is $\mathbb{H}$-progressively measurable, it holds that $A\circ \Gamma$ is a progressively measurable and continuous process and hence predictable.

In the sequel we construct for the case of a \textit{power utility} function and a \textit{logarithmic utility} function, 
\begin{equation}\label{power-utility}
U(x) = \begin{cases}\dfrac{x^{1-p}}{1-p}\,, & p \in \R_+ \setminus\{0,1\}\quad\mbox{(power)},\\
\log(x)\, , & p=1\,, x>0\quad \mbox{(log)},
\end{cases}
\end{equation}
a strategy $\tilde{\nu} \in \tilde\A_{\Lambda_t;\HH}$ such that $\tilde\nu\circ\Lambda_- \in \A_{t;\FF}$ and that is optimal for the right-hand side of \eqref{final-problem2}, i.e., $\tilde{\nu}\circ\Lambda_-$ is optimising
\begin{align}\label{eq:final-problem2}
\mathbb{E}\left[U\left(x+\int\limits_{\Lambda_t}^{\Lambda_T}\tilde\nu_u\,\dW_u+\int\limits_{\Lambda_t}^{\Lambda_T}\tilde\nu_u \tilde{\theta}_u\,\text{d}u\right)\mid \Hh_{\Lambda_t}\right]\,.
\end{align}

To derive our strategy in the following theorem, we adapt the approach in \cite[Theorem 3.1]{kallsen2010utility} and in \cite{GOLL2000} to our setting.

\begin{theorem}\label{thm:another-optimal-problem} 
Let $\Lambda \in \mathbb{S}^+$ a.s., 
and $\pi_u = \tilde{\theta}_u/p$, $u\in [0, R]$, for $\tilde{\theta}$ being as in Assumption \ref{assum:assumption-A} and satisfying
	\begin{equation}\label{eq:generatingandmoment}
			\EE\left[ \exp\left(\int_0^u\tilde{\theta}_s^2\, \ud s\right)\right]<\infty\,, \quad  \mbox{\ for all\ } u\in [0,R].
			\end{equation} Denote by $\E(Y)$ the stochastic exponential of a given semimartingale $Y$.
Then 
\begin{equation}\label{breve-nu}
\tilde{\nu}_s = x \pi_s \,\mathcal{E}\left(\int\limits_0^\cdot \pi_u \, \ud X_u\right)_s\,, \qquad \Lambda_t \leq s\leq  R\,,
\end{equation}
is an admissible strategy in $ \tilde{\A}_{\Lambda_t;\HH}$ that  is optimal for \eqref{eq:final-problem2} with value $V_s^{\Lambda_t,x}(\tilde{\nu}) = x\, \mathcal{E}\left(\int_0^\cdot \pi_u \, \ud X_u\right)_s$, $\Lambda_t \leq s\leq R$ when $\tilde{\theta}_r$ is  $\mathcal{H}_0$-measurable for any $r\in [0,R]$.\\
Moreover, the corresponding maximal  expected \textit{power} utility is given by
$$\EE[U(V_ {\Lambda_{T}}^{\Lambda_t,x}(\tilde{\nu}))\mid \hat\Hh_t] = \frac{x^{1-p}}{1-p}\exp\left\{\int\limits_{\Lambda_t}^{\Lambda_T}\frac{1-p}{2p} \tilde{\theta}_u^2 \, \ud u \right\}\,,$$
while for the maximal \textit{logarithmic} utility we get
$$\EE[U(V_ {\Lambda_{T}}^{\Lambda_t,x}(\tilde{\nu}))\mid \hat\Hh_t] = \log (x)+\int\limits_{0}^{\Lambda_t}\tilde{\theta}_u \,\ud W_u +\frac 12\int\limits_{0}^{\Lambda_T} \tilde{\theta}^2_u\, \ud u\, .$$
\end{theorem}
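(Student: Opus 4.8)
The plan is to follow the classical martingale/duality argument for utility maximisation adapted to the time-changed setting, exploiting that under $\HH$ the process $W$ is a genuine Brownian motion and that $\tilde\theta$ is assumed $\Hh_0$-measurable. First I would verify that the candidate $\tilde\nu$ in \eqref{breve-nu} is well-defined and lies in $\tilde\A_{\Lambda_t;\HH}$: the stochastic exponential $\mathcal{E}(\int_0^\cdot \pi_u\,\ud X_u)$ is strictly positive (so the wealth process is non-negative, indeed positive), it is $\Lambda$-adapted because $\pi_u = \tilde\theta_u/p$ is $\Lambda$-adapted by Assumption \ref{assum:assumption-A} and $X$ only contributes through $W$ and the continuous process $A\circ\Gamma$, and the integrability condition \eqref{eq:generatingandmoment} (a Novikov-type bound) guarantees $\tilde\nu \in L(X,\HH)$ together with the required square-integrability. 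The identity $V_s^{\Lambda_t,x}(\tilde\nu) = x\,\mathcal{E}(\int_0^\cdot\pi_u\,\ud X_u)_s$ is then just the defining SDE of the stochastic exponential: since $\tilde\nu_s = \pi_s V_s$, we have $\ud V_s = \tilde\nu_s\,\ud X_s = \pi_s V_s\,\ud X_s$, whose unique solution with $V_{\Lambda_t}=x\,\mathcal{E}(\cdots)_{\Lambda_t}$ is the stated exponential.

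\textbf{Optimality.} For optimality I would use the standard change-of-measure trick of \cite{kallsen2010utility, GOLL2000}. Write $X_r = W_r + \int_0^r\tilde\theta_u\,\ud u$; since $W$ is an $\HH$-Brownian motion and \eqref{eq:generatingandmoment} gives Novikov's condition, the density $\mathcal{E}(-\int_0^\cdot\tilde\theta_u\,\ud W_u)$ defines an equivalent martingale measure $\QQ$ under which $X$ (hence $W_r - \int_0^r(-\tilde\theta_u)\ud u = X_r$... more precisely $\hat W_r := W_r + \int_0^r\tilde\theta_u\ud u = X_r$) is a $\QQ$-Brownian motion, so every admissible discounted wealth process is a non-negative $\QQ$-local martingale, hence a $\QQ$-supermartingale. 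For the power utility, the convex-duality inequality $U(y) \le U(\hat y) + U'(\hat y)(y-\hat y)$ applied along the candidate terminal wealth $\hat y = V_{\Lambda_T}^{\Lambda_t,x}(\tilde\nu)$ reduces the claim to showing $\EE[U'(\hat y)(V_{\Lambda_T}(\nu) - \hat y)\mid\hat\Hh_t]\le 0$ for every competitor; the key computation is that $U'(\hat y) = \hat y^{-p}$ is, up to an $\hat\Hh_t$-measurable factor, precisely (a constant times) the $\QQ$-density restricted to $[\Lambda_t,\Lambda_T]$ — this is the content of choosing $\pi = \tilde\theta/p$ — so that $U'(\hat y)V_{\Lambda_T}(\nu)$ becomes a $\QQ$-supermartingale and the conditional expectation is $\le U'(\hat y)\hat y$. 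The logarithmic case is handled identically (or more directly) using $\log(y/\hat y)\le y/\hat y - 1$.

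\textbf{Computing the value functions.} Once optimality is established, the maximal utilities follow by direct computation with the explicit solution. For the power utility, $\mathcal{E}(\int_0^\cdot\pi_u\ud X_u)_s = \exp\{\int_0^s\pi_u\ud W_u + \int_0^s\pi_u\tilde\theta_u\ud u - \tfrac12\int_0^s\pi_u^2\ud u\}$ with $\pi=\tilde\theta/p$, so $V_{\Lambda_T}^{1-p}$ has an explicit exponential form; taking $\EE[\cdot\mid\hat\Hh_t]$ and using that $\tilde\theta$ is $\Hh_0$-measurable (so the only randomness left inside the conditional expectation is the Brownian increment $\int_{\Lambda_t}^{\Lambda_T}\pi_u\ud W_u$, a Gaussian with known conditional variance $\int_{\Lambda_t}^{\Lambda_T}\pi_u^2\ud u$), the Gaussian Laplace transform collapses everything into $\frac{x^{1-p}}{1-p}\exp\{\int_{\Lambda_t}^{\Lambda_T}\frac{1-p}{2p}\tilde\theta_u^2\ud u\}$. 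For the logarithmic utility one simply takes $\log$ of $V_{\Lambda_T} = x\,\mathcal{E}(\cdots)_{\Lambda_T}$, splits $\int_0^{\Lambda_T} = \int_0^{\Lambda_t} + \int_{\Lambda_t}^{\Lambda_T}$, and notes the $[\Lambda_t,\Lambda_T]$-increment of the martingale part has zero conditional mean while the drift and quadratic-variation terms combine to $\frac12\int_{\Lambda_t}^{\Lambda_T}\tilde\theta_u^2\ud u$; adding the $\hat\Hh_t$-measurable head $\int_0^{\Lambda_t}\tilde\theta_u\ud W_u$ and $\frac12\int_0^{\Lambda_t}\tilde\theta_u^2\ud u$ gives the stated formula.

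\textbf{Main obstacle.} I expect the delicate point to be the rigorous use of the $\Hh_0$-measurability of $\tilde\theta$ together with the fact that $\hat\Hh_t = \Hh_{\Lambda_t}$ is a time-changed $\sigma$-algebra: one must be careful that conditioning on $\Hh_{\Lambda_t}$ (with $\Lambda_t$ an $\Hh_0$-measurable, hence ``deterministic-given-$\Hh_0$'' stopping time) really does freeze $\tilde\theta$ on all of $[0,R]$ and reduce the increment $\int_{\Lambda_t}^{\Lambda_T}\pi_u\,\ud W_u$ to a conditionally Gaussian variable with the claimed variance — this is where the optional sampling / independence structure of the enlargement (Propositions \ref{W-BM}, \ref{Hcont}) and the strict monotonicity $\Lambda\in\mathbb{S}^+$ are genuinely needed, and where a naive application of the classical formulas would be unjustified. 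A secondary technical point is checking admissibility of arbitrary competitors carefully enough that the supermartingale property of $U'(\hat y)V(\nu)$ holds with conditional (not just unconditional) expectations.
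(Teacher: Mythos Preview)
Your proposal is correct and follows essentially the same verification approach as the paper: check admissibility of the candidate \eqref{breve-nu}, then use the concavity inequality $U(y)\le U(\hat y)+U'(\hat y)(y-\hat y)$ to reduce optimality to a (super)martingale property, and finally compute the value explicitly using the $\Hh_0$-measurability of $\tilde\theta$.

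The only packaging difference worth noting is how that martingale property is obtained. The paper does it \emph{directly}: it introduces the opportunity-type process $L_s=\exp\{\int_s^{\Lambda_T}\frac{1-p}{2p}\tilde\theta_u^2\,\ud u\}$ and verifies by It\^o's formula that $L/L_0\,(V^{\Lambda_t,x}(\tilde\nu))^{-p}V^{\Lambda_t,x}(\psi)$ is an $\HH$-martingale for every competitor $\psi$, which gives equality (not just $\le 0$) in the correction term. You instead construct the equivalent martingale measure $\QQ$ via Novikov--Girsanov and argue that $U'(\hat y)=\hat y^{-p}$ equals, up to an $\Hh_0$-measurable factor, the $\QQ$-density, so that $V(\psi)$ being a non-negative $\QQ$-local martingale (hence $\QQ$-supermartingale) yields the required inequality. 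These are two sides of the same coin: your $\Hh_0$-measurable prefactor is exactly the paper's $L$. Similarly, for the value the paper reads it off from the martingale property of $L/L_0\,(V(\tilde\nu))^{1-p}$, while you compute the conditional Gaussian Laplace transform directly; both routes lead to the same closed forms. For the logarithmic case the paper simply cites \cite{GOLL2000}, which is the same mechanism you sketch.

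Your identification of the delicate point---justifying that conditioning on $\Hh_{\Lambda_t}$ freezes $\tilde\theta$ on all of $[0,R]$ and leaves a conditionally centred Brownian increment---is on target; the paper handles this by invoking optional sampling together with the assumed $\Hh_0$-measurability of $\tilde\theta$.
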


\begin{proof}
We first check that $\tilde{\nu} \in \tilde{\A}_{\Lambda_t;\HH}$.
By  Assumption \ref{assum:assumption-A} it is obvious that $\tilde{\nu}$ as defined in \eqref{eq:generatingandmoment} is c\`agl\`ad. 
In view of the additional assumption \eqref{breve-nu} on $\tilde{\theta}$, 
we deduce that $\pi \in L(X, \mathbb{H})$ and hence $\tilde{\nu} \in L(X, \mathbb{H})$. As $\tilde{\theta}_u$ is assumed to be constant on $[\Lambda_{u-}, \Lambda_u]$, it follows 
that $\tilde{\nu}$ is also $\Lambda$-adapted.
Moreover, we derive
\[
V_s^{\Lambda_t, x} (\tilde{\nu}) =  x + \int\limits _{\Lambda_t}^s \tilde{\nu}_u \, \ud X_u= x \,\mathcal{E}\left(\int\limits_0^\cdot \pi_u \, \ud X_u\right)_s\,,  \qquad \Lambda_t \leq s\leq R\,,
 \]
from which we deduce that $V_s^{\Lambda_t, x}(\hat{\nu})$ is non-negative. 
Observe that the assumption \eqref{eq:generatingandmoment} on $\tilde{\theta}$ implies $\EE\left[\int_0^{R} \tilde{\nu}_u^2\, \ud u\right]< \infty$.
Indeed, define for $u \in [0,R]$
	$$Z_u=\exp\left\{\frac 2 p\int_0^u\tilde{\theta}_s\,\ud W_s-\frac 12\left(\frac 2p\right)^2 \int_0^u\tilde{\theta}^2_s\, \ud s\right\} \quad \mbox{and} \quad
	b=2\frac{2p-1}{p} +\frac 12\left(\frac 2p\right)^2.$$
	Then, conditioning on the  sigma-algebra $\sigma\{\tilde{\theta}_s, s\leq u\}$, we compute
	\begin{align*}
	&\EE\left[\int_0^{R} \tilde{\theta}^2_u\exp\left\{2\int_0^u\frac{\tilde{\theta}_s}{p}\ud W_s+2\int_0^u\frac{2p-1}{p}\tilde{\theta}^2_s \ud s\right\}\ud u\right]\\
	&\qquad  = \int_0^{R}\EE\left[ \tilde{\theta}^2_u\exp\left\{b\int_0^u\tilde{\theta}_s^2\,\ud s\right\}	\EE\left[Z_u\mid\sigma\{\tilde{\theta}_s, s\leq u\}\right]\right] \ud u\\
	&\qquad =  \int_0^{R}\EE\left[ \tilde{\theta}^2_u\exp\left\{b\int_0^u\tilde{\theta}_s^2\,\ud s\right\}\right]\ud u\,.
	\end{align*}
 Therefore $\tilde{\nu} \in \tilde{\A}_{\Lambda_{t;\HH}}$. \\
For the case of \textit{power utility}, let $\psi$ be another admissible strategy in $\tilde\A_{\Lambda_t; \HH}$. Then we can write
$$\psi_s = \eta_s V_s^{\Lambda_t,x}(\psi)\,, \qquad \Lambda_t \leq s\leq R\,,$$
for an $\R$-valued $\HH$-adapted process $\eta$ and 
$$\ud V_s^{\Lambda_t,x}(\psi) = V_s^{\Lambda_t,x}(\psi) \eta_s\, \ud X_s\,, \qquad \Lambda_t \leq s\leq R.$$
Define $L_t = \exp\left\{\int_t^{\Lambda_T} \alpha_u\, \ud u\right\}$, with $\int_0^R| \alpha_u|\, \ud u < +\infty$ by \eqref{eq:generatingandmoment},
where 
$$\alpha_t:= \frac{1-p}{2p} \tilde{\theta}_t^2\,.$$
The process $L/L_0$ is continuous and of finite variation. Hence it is an $\HH$-semimartingale. Applying the It\^o formula to $$F(L/L_0, V^{\Lambda_t,x}(\tilde{\nu}), V^{\Lambda_t,x}(\psi)) = L/L_0 \left(V^{\Lambda_t,x}(\tilde{\nu})\right))^{-p}V^{\Lambda_t,x}(\psi)\,,$$ 
we deduce that the latter is an $\HH$-martingale. Then since $U$ as defined in \eqref{power-utility} is concave, we have
\begin{align*}
U(V_{\Lambda_T}^{\Lambda_t,x} (\psi)) \leq U\left(V_{\Lambda_T}^{\Lambda_t,x}(\tilde{\nu})\right) + U' \left(V_{\Lambda_T}^{\Lambda_t,x}(\tilde{\nu})\right)\left(V_{\Lambda_T}^{\Lambda_t,x} (\psi) -V_{\Lambda_T}^{\Lambda_t,x}(\tilde{\nu})\right)\,,
\end{align*}
for any admissible strategy $\psi$. This implies 
\begin{align*}
\mathbb{E}\left[U\left(V_{\Lambda_T}^{\Lambda_t,x}(\psi)\right) \mid \hat{\Hh}_t\right] &\leq \mathbb{E}\left[U\left(V_{\Lambda_T}^{\Lambda_t,x}(\tilde{\nu})\right)\mid \hat{\Hh}_t\right]\\
&\qquad +  \mathbb{E}\left[ L_{\Lambda_T}\left(V_{\Lambda_T}^{\Lambda_t,x}(\tilde{\nu})\right)^{-p} V_{\Lambda_T}^{\Lambda_t,x} (\psi) - L_{\Lambda_T}\left(V_{\Lambda_T}^{\Lambda_t,x}(\tilde{\nu})\right)^{1-p} \mid \hat{\Hh}_t \right]\\
&= \mathbb{E}\left[ U\left(V_{\Lambda_T}^{\Lambda_t,x}(\tilde{\nu})\right)\mid \hat{\Hh}_t\right]\,,
\end{align*}
where we used the additional assumption of $\mathcal{H}_0$-measurability, the optional sampling theorem and the fact that $L/L_0\left(V^{\Lambda_t,x}(\tilde{\nu})\right)^{-p} V^{\Lambda_t,x}(\psi)$ and 
$L/L_0\left(V^{\Lambda_t,x}(\hat{\nu}\right))^{1-p}$ are $\HH$-martingales with the same value at $t$. Hence the first claim follows.\\
The corresponding maximal expected utility follows from observing that 
\begin{align*}
\EE\left[U(V_{\Lambda_T}^{\Lambda_t,x}(\tilde{\nu}))\mid \hat{\Hh}_t\right] &= \frac{L_0}{1-p} \EE\left[L_{\Lambda_T}/L_0(V_{\Lambda_T}^{\Lambda_t,x}(\tilde{\nu}))^{1-p} \mid \hat{\Hh}_t\right]\\
&= \frac{x^{1-p}}{1-p} \exp\left\{\int\limits_{\Lambda_t}^{\Lambda_T} \alpha_u\, \ud u\right\}\,.
\end{align*}
For the case of \textit{logarithmic utility}, the optimal strategy \eqref{breve-nu} with $p=1$ directly follows from Theorem 3.1 and Example 4.2 in \cite{GOLL2000} for the terminal wealth case. The corresponding maximal utility is obtained from
\begin{align*}
\EE\left[U(V_{\Lambda_T}^{\Lambda_t,x}(\tilde{\nu}))\mid \hat{\Hh}_t\right] &= \EE\left[\log( x \,\mathcal{E}\left(\int\limits_0^\cdot \pi_u \, \ud X_u\right)_{\Lambda_T})\mid \hat{\Hh}_t\right]\\
 &= \log (x)+\EE\left[\int\limits_{0}^{\Lambda_T}\tilde{\theta}_u \,\ud W_u +\frac 12\int\limits_{0}^{\Lambda_T} \tilde{\theta}^2_u\, \ud u \mid \hat{\Hh}_t\right].
\end{align*} 
Under the additional assumption of $\mathcal{H}_0$-measurability and noting that the stochastic integral w.r.t.\ $W$ is an $\mathbb{H}$-martingale, we get the stated result.
\end{proof}

The additional assumption of $\mathcal{H}_0$-measurability will be satisfied for example when 
the random time process $\Gamma$  is the only stochastic driver for $\tilde\theta$. This assumption is similar to the $\mathcal{G}_0$-measurability of the local characteristics in \cite{kallsen2010utility}.

\par \medskip
In order to optimise \eqref{eq:final-problem2} over the set of strategies  $\A_{t;\FF}$ for the power utility case ($p \in \R_+ \setminus\{0,1\}$) and the logarithmic utility case ($p=1$), we impose a stronger condition on the finite-variaton process $A$.
\begin{assumption}\label{assum:assumption-A2}
Let $\Gamma$ be as in Definition \ref{def:time-change} and $\theta$ a process satisfying Assumption \ref{assumption-2}. Define
$$\tilde{\theta}_r= (\theta \circ \Gamma)_r\,, \qquad 0\leq r\leq R\,.$$
Assume $A$ is a finite-variation process such that $A \circ \Gamma$ is absolutely continuous w.r.t. the Lebesgue measure with density process $\tilde{\theta}$, i.e.,
$$(A\circ \Gamma)_r =\int_0^r\tilde{\theta}_u\, \ud u\,, \qquad 0\leq r\leq R\,.$$
\end{assumption}

With this assumptin, we prove the following technical results.
\begin {lemma}\label{lem:measurability1}
Let $A$ satisfy Assumption \ref{assum:assumption-A2}. 
Then 
\begin{enumerate}
\item $A$ is an $\FF$-adapted c\`adl\`ag process,
\item $A\circ \Gamma$ is $\HH$-predictable.
\end{enumerate}
\end{lemma}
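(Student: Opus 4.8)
\textit{Set-up.} Recall that in Section~\ref{S5} the time change is strictly increasing, $\Lambda\in\mathbb{S}^+$, so by Property~\ref{P4} the generalised inverse $\Gamma$ is continuous and $\Gamma_{\Lambda_t}=t$ for all $t\in[0,T]$. Both statements rest on the identity
\[
A_t \;=\; (A\circ\Gamma)_{\Lambda_t}\;=\;\int_0^{\Lambda_t}\tilde\theta_u\,\ud u,\qquad t\in[0,T],
\]
where $\tilde\theta=\theta\circ\Gamma$ and $\theta$ fulfils Assumption~\ref{assumption-2}. By Lemma~\ref{lem:measurability}, $\theta$ is left-continuous and $\FF$-adapted, hence $\FF$-predictable; and, arguing exactly as in the proof of Theorem~\ref{TrafoFormula} (there $\nu\circ\Gamma$ is shown to be $\HH$-adapted for any $\nu$ satisfying Assumption~\ref{assumption-2}), the time-changed process $\tilde\theta$ is $\HH$-adapted, and it is left-continuous — hence $\HH$-predictable — since $\Gamma$ is continuous and $\theta$ is left-continuous.

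\textit{Proof of (1).} The path regularity is immediate: $A=(A\circ\Gamma)\circ\Lambda$ is c\`adl\`ag, being the composition of the continuous process $A\circ\Gamma$ with the c\`adl\`ag increasing process $\Lambda$. For $\FF$-adaptedness the plan is to rewrite $A$ in the physical clock by a pathwise change of variables for Lebesgue--Stieltjes integrals along $\Lambda$ (justified, as $\Gamma$ is the continuous inverse of $\Lambda$, by splitting $[0,\Lambda_t]$ into the flat stretches of $\Gamma$ — which correspond to the jumps of $\Lambda$ — and its complement), obtaining
\[
A_t=\int_0^{\Lambda_t}(\theta\circ\Gamma)_u\,\ud u=\int_0^t\theta_s\,\ud\Lambda_s,\qquad t\in[0,T].
\]
The right-hand side is $\FF$-adapted because $\theta$ is $\FF$-predictable and $\Lambda$ is an $\FF$-adapted increasing (hence finite-variation) process. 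An alternative route, staying within the tools of Section~\ref{S2}: $\tilde\theta_u=\theta_{\Gamma_u}$ is $\F_{\Gamma_u}\subseteq\F_{\Gamma_r}$-measurable for $u\le r$, as $\theta$ is $\FF$-progressive and $\Gamma_u\le\Gamma_r$ are $\FF$-stopping times (Property~\ref{P1}); approximating the integral by Riemann sums, $(A\circ\Gamma)_r$ is $\F_{\Gamma_r}$-measurable, so $A\circ\Gamma$ is $\FF_\Gamma$-adapted and, being continuous with $\FF_\Gamma$ right-continuous, also $\FF_\Gamma$-progressive; since $\Lambda_t$ is an $\FF_\Gamma$-stopping time (Property~\ref{P3}), $A_t=(A\circ\Gamma)_{\Lambda_t}$ is $\F_{\Gamma_{\Lambda_t}}=\F_t$-measurable by Property~\ref{P5}.

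\textit{Proof of (2).} Being left-continuous and $\HH$-adapted, $\tilde\theta$ is $\HH$-predictable; hence $(A\circ\Gamma)_r=\int_0^r\tilde\theta_u\,\ud u$ is a continuous, $\HH$-adapted process, therefore $\HH$-predictable. This is precisely the observation recorded just after Assumption~\ref{assum:assumption-A}, now obtained from Assumption~\ref{assum:assumption-A2}.

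\textit{Main obstacle.} The only genuinely delicate point is the $\FF$-adaptedness in (1). Evaluating the $\HH$-adapted process $A\circ\Gamma$ at the $\HH$-stopping time $\Lambda_t$ delivers, for free, only $\Hh_{\Lambda_t}=\hat\Hh_t$-measurability of $A_t$, which is strictly coarser than the asserted $\F_t$-measurability (the filtration $\HH$ knows the entire path of $\Lambda$ already at time~$0$). Descending to $\F_t$ is exactly where one must exploit the symmetry between $\Lambda$ and $\Gamma$ encoded in Properties~\ref{P1}--\ref{P5} — equivalently, the time-change-of-variables identity above — and take some care with the joint measurability and pathwise regularity of $\tilde\theta$ so that the relevant integrals are well behaved; the remaining bookkeeping is routine.
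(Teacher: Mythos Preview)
Your proof is correct. Part~(2) and the c\`adl\`ag claim in~(1) coincide with the paper's argument essentially verbatim.

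For the $\FF$-adaptedness in~(1), you and the paper diverge. The paper, after observing (as you do) that $\tilde\theta_r$ is $\F_{\Gamma_r}$-measurable, argues that the truncated process $(\tilde\theta_r\mathbf 1_{\{\Gamma_r\le t\}})_{r\in[0,R]}$ is optional (via \cite[Proposition~I.1.23]{Jacod2003}), hence progressively measurable, and concludes that $A_t=\int_0^R\tilde\theta_r\mathbf 1_{\{\Gamma_r\le t\}}\,\ud r$ is $\F_t$-measurable. Your Route~2 is close in spirit but cleaner: you make the $\FF_\Gamma$-progressive measurability of $A\circ\Gamma$ explicit and then invoke Property~\ref{P5} directly, which is more transparent than the paper's somewhat terse appeal to optionality. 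Your Route~1---the pathwise Lebesgue--Stieltjes change of variables $A_t=\int_0^t\theta_s\,\ud\Lambda_s$---does not appear in the paper at all and is the most economical argument: it bypasses the time-changed filtration $\FF_\Gamma$ entirely and reads off $\F_t$-measurability from the $\FF$-predictability of $\theta$ and the $\FF$-adaptedness of~$\Lambda$. The only cost is the care you flag about the flat stretches of $\Gamma$, which you handle correctly (the jump of $\Lambda$ at $\tau$ contributes $\theta_\tau\,\Delta\Lambda_\tau$ on both sides, with $\theta_\tau=\theta_{\tau-}$ by left-continuity).
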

\begin{proof}
It holds that 
$$A_t = \int_0^{\Lambda_t} (\theta \circ \Gamma)_s \, \ud s, \qquad 0\leq t\leq T\,.$$
From Lemma \ref{lem:measurability}, we know that $\theta$ is  $\mathbb{F}$-predictable, from which we deduce that $\tilde{\theta}_r = \theta(\Gamma_r)$ is $\mathbb{F}_{\Gamma_r}$-measurable for all $r \in [0,R]$. It follows from \cite[Proposition I.1.23]{Jacod2003} that $(\tilde{\theta}_r \mathbf{1}_{\{\Gamma_r\leq t\}})_{0\leq r\leq R}$ is optional, for all $t\in [0,T]$. Hence it is progressively measurable, which implies that $A$ is $\mathbb{F}$-adapted.
Moreover $A$ is c\`adl\`ag as a composition of a continuous function with a right-continuous increasing function. Hence we proved the first claim of the lemma.

For the second claim, observe that it follows from Theorem \ref{TrafoFormula}, that $\tilde{\theta}$ is $\mathbb{H}$-adapted. Since it is c\`agl\`ad, then it is $\mathbb{H}$-predictable and hence progressively measurable. The latter implies the second claim of the lemma.
\end{proof}

\begin{theorem}\label{hat_nu}
Let $\Lambda \in \mathbb{S}^+$, a.s. Let $S$ be an $\mathbb{F}$-semimartingale with decomposition \eqref{semimartingale} and finite variation part $A$ satisfying Assumption \ref{assum:assumption-A2}. Assume moreover that $\tilde{\theta}$ 
satisfies \eqref{eq:generatingandmoment}.
Then the strategy $\hat{\nu}$ given by
\begin{align}\label{hat-nu}
\hat{\nu}_u 
&= \frac{(\tilde{\theta} \circ \Lambda)_u x}{p}  \exp\left\{ \int\limits_0^u \frac{(\tilde{\theta} \circ \Lambda)_s}{p}\, \ud M_s
 + \int\limits_0^u \frac{2p-1}{2p^2} (\tilde{\theta} \circ \Lambda)_s \, \ud A_s\right\}\,, \quad 0  \leq u\leq T,
\end{align}
belongs to $\A_{t;\FF}$ and is optimal for the right-hand side of \eqref{final-problem2} when $\tilde{\theta}_r$ is  $\mathcal{H}_0$-measurable for any $r\in [0,R]$.
\end{theorem}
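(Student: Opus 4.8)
The plan is to transport the optimal strategy $\tilde\nu$ from Theorem~\ref{thm:another-optimal-problem} back to the original filtration $\FF$ via the change of variable formulas, check it lands in $\A_{t;\FF}$, and then close the chain of inequalities from Proposition~\ref{prop:problem-with-inequality}. First I would note that under Assumption~\ref{assum:assumption-A2} we have $\tilde\theta = \theta\circ\Gamma$ with $\theta$ satisfying Assumption~\ref{assumption-2}, so $\tilde\theta$ is $\Lambda$-adapted (being of the form (something)$\circ\Gamma$, it is constant on the flat pieces of $\Gamma$, i.e.\ on $[\Lambda_{u-},\Lambda_u]$) and, by Lemma~\ref{lem:measurability1}, $\HH$-predictable; hence Assumption~\ref{assum:assumption-A} holds and Theorem~\ref{thm:another-optimal-problem} applies with $\pi_u = \tilde\theta_u/p$. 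The optimal $\HH$-strategy is $\tilde\nu_s = x\pi_s\,\E(\int_0^\cdot \pi_u\,\ud X_u)_s$, which is $\Lambda$-adapted, so by property~\eqref{general-inverse} we may define $\hat\nu := \tilde\nu\circ\Lambda = \tilde\nu\circ\Lambda_-$; a direct computation using $\tilde\nu\circ\Lambda_s = (\tilde\theta\circ\Lambda)_s\, x/p \cdot \E(\int_0^\cdot\pi\,\ud X)_{\Lambda_s}$ together with Theorem~\ref{TrafoFormula} (to rewrite $\int_0^{\Lambda_s}\pi_u\,\ud W_u$ as $\int_0^s (\pi\circ\Lambda)_u\,\ud M_u = \int_0^s \frac{(\tilde\theta\circ\Lambda)_u}{p}\,\ud M_u$) and Assumption~\ref{assum:assumption-A2} (to rewrite $\int_0^{\Lambda_s}\pi_u\,\ud(A\circ\Gamma)_u$ in terms of $\ud A$) recovers the closed form~\eqref{hat-nu} for $\hat\nu$ and shows $V_s^{\Lambda_t,x}(\tilde\nu)\circ\Lambda = V_\cdot^{t,x}(\hat\nu)$.

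Next I would verify $\hat\nu\in\A_{t;\FF}$. The key point is that $\hat\nu$ satisfies Assumption~\ref{assumption-2}: it is expressed through $\tilde\theta\circ\Lambda = \theta\circ\Gamma\circ\Lambda = \theta$ (using~\eqref{general-inverse}) which is a continuous functional of the past of $\Lambda$ and $M$ by hypothesis, and through the stochastic exponential whose exponent is $\int_0^u\frac{(\tilde\theta\circ\Lambda)_s}{p}\ud M_s + \int_0^u \frac{2p-1}{2p^2}(\tilde\theta\circ\Lambda)_s\,\ud A_s$ — the first integral is a functional of the paths of $M$ and $\Lambda$ by the change-of-variable identity and the second likewise through $A_s = \int_0^{\Lambda_s}\tilde\theta_u\,\ud u$; I would invoke the pathwise/continuity stability of stochastic integrals against a time-changed Brownian motion to conclude the required continuous-functional representation. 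Membership in $\Theta$ follows from the moment bound $\EE[\int_0^R\tilde\nu_u^2\,\ud u]<\infty$ established inside the proof of Theorem~\ref{thm:another-optimal-problem} (via the argument with $Z_u$ and $b$), transported back through Theorem~\ref{TrafoFormula}'s isometry, so that $\EE[\int_0^T\hat\nu_{s-}^2\,\ud\Lambda_s]<\infty$; and the non-negativity of $V^{t,x}(\hat\nu)$ is inherited from that of $V^{\Lambda_\cdot,x}(\tilde\nu) = x\E(\int\pi\,\ud X)$.

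Finally I would close the loop: by Proposition~\ref{prop:problem-with-inequality}, $J_\HH^{t,x}(\nu)\le \esssup_{\tilde\nu\in\tilde\A_{\Lambda_t;\HH}} \EE[U(x+\int_{\Lambda_t}^{\Lambda_T}\tilde\nu\,\ud W + \int_{\Lambda_t}^{\Lambda_T}\tilde\nu\tilde\theta\,\ud u)\mid\Hh_{\Lambda_t}]$ for every $\nu\in\A_{t;\FF}$; Theorem~\ref{thm:another-optimal-problem} identifies the right-hand side as attained by $\tilde\nu$; and the change-of-variable computation above shows $\EE[U(V_{\Lambda_T}^{\Lambda_t,x}(\tilde\nu))\mid\hat\Hh_t] = J_\HH^{t,x}(\hat\nu)$ since $\Hh_{\Lambda_t}=\hat\Hh_t$ and $V_{\Lambda_T}^{\Lambda_t,x}(\tilde\nu) = V_T^{t,x}(\hat\nu)$. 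Because $\hat\nu\in\A_{t;\FF}$, the upper bound is actually achieved within $\A_{t;\FF}$, so $J_\HH^{t,x}(\hat\nu) = \esssup_{\nu\in\A_{t;\FF}}J_\HH^{t,x}(\nu)$ and $\hat\nu$ is optimal for the right-hand side of~\eqref{final-problem2}. The main obstacle I anticipate is the careful verification that $\hat\nu$ genuinely satisfies Assumption~\ref{assumption-2} — i.e.\ that composing the stochastic exponential and the back-transported stochastic integral still yields a \emph{continuous} functional of the stopped paths of $(\Lambda,M)$ rather than merely a measurable one — since this is exactly the measurability subtlety the authors flagged as being lost under generic change of variables; handling the $\mathcal H_0$-measurability hypothesis on $\tilde\theta$ is what makes this go through.
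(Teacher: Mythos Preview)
Your proposal is correct and follows essentially the same approach as the paper: pull back the optimal $\tilde\nu$ from Theorem~\ref{thm:another-optimal-problem} via $\hat\nu=\tilde\nu\circ\Lambda$, use the change of variable formulas to obtain the closed form~\eqref{hat-nu}, verify membership in $\A_{t;\FF}$ (the paper, like you, simply asserts that $\hat\nu$ satisfies Assumption~\ref{assumption-2} without a detailed check), and conclude optimality from Theorem~\ref{thm:another-optimal-problem}. One minor point: the step converting $\int_0^{\Lambda_s}\pi_u\,\ud W_u$ into $\int_0^s(\pi\circ\Lambda)_u\,\ud M_u$ is the direction of Theorem~\ref{TrafoFormulaBack} (which is what the paper invokes), not Theorem~\ref{TrafoFormula}; under Assumption~\ref{assum:assumption-A2} one has $\pi\circ\Lambda=\theta/p$ satisfying Assumption~\ref{assumption-2}, so either formula links the two integrals, but the natural citation is Theorem~\ref{TrafoFormulaBack}.
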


\begin{proof}
As $\tilde{\nu}$ in \eqref{breve-nu} is $\Lambda$-adapted, and $\tilde{\nu} \in L(X, \mathbb{H})$, then applying Theorem \ref{TrafoFormulaBack}, yields
\begin{align*}
\hat{\nu}_u &= \tilde{\nu}\circ\Lambda_u\\
& = \frac{(\tilde{\theta}\circ \Lambda)_u}{p}\, x \exp\left\{\int\limits_0^{\Lambda_u} \frac{\tilde{\theta}_s}{p}\, \ud W_{s} + \int\limits_0^{\Lambda_u} \frac{2p-1}{2p^2} \tilde{\theta}_s^2 \, \ud s\right\}\\
&= \frac{(\tilde{\theta} \circ \Lambda)_u}{p}\,x  \exp\left\{ \int\limits_0^u \frac{(\tilde{\theta} \circ \Lambda)_s}{p}\, \ud W_{\Lambda_s} + \int\limits_0^u \frac{2p-1}{2p^2} (\tilde{\theta} \circ \Lambda)_s \, \ud A_s\right\}\,,
\end{align*} 
which is $\mathcal{F}_u$-measurable, for all $u \geq 0$, c\`agl\`ad, and satisfies Assumption \ref{assumption-2}. 

The additional assumption \eqref{eq:generatingandmoment} on $\tilde{\theta}$ implies $\EE\left[\int_0^{R} \tilde{\nu}_u^2\, \ud u\right]< \infty$ and
since $\hat\nu$ is $\FF$-adapted, it holds 
\begin{equation*}
\EE\left[\int\limits_0^{R} \tilde{\nu}_u^2\, \ud u\right] = \EE\left[\int\limits_0^{\Gamma_{R}}\hat\nu_u^2 \, \ud \Lambda_u\right]\\
=\EE\left[\int\limits_0^{T}\hat\nu^2_u \, \ud \Lambda_u\right]< \infty.
\end{equation*}
Finally, observe that the non-negativity of $V_T^{t,x}(\hat{\nu})$ follows from the non-negativity of $V_{R}^{\Lambda_{t,x}}(\tilde{\nu})$. We conclude that $\hat{\nu} \in \A_{t;\FF}$ and by Theorem \ref{thm:another-optimal-problem}
that it is optimal for \eqref{eq:final-problem2}  when $\tilde{\theta}_r$ is  $\mathcal{H}_0$-measurable for any $r\in [0,R]$.
\end{proof}
Notice that the strategy $\tilde{\nu}$ constructed in Theorem \ref{thm:another-optimal-problem} is such that $\tilde{\nu}\circ \Lambda = \hat{\nu} \in \mathcal{A}_{t;\mathbb{F}}$ and hence we observe that under Assumption \ref{assum:assumption-A2}, we have equality in \eqref{final-problem2}.
\subsection{Solution to the original optimisation problem}
In the next theorem we finally provide a solution to the original optimisation problem (\ref{OptimizationProblem}). 
\begin{theorem}\label{Optimality} Let $S$ be an $\mathbb{F}$-semimartingale with decomposition \eqref{semimartingale} and finite variation part $A$ satisfying Assumption \ref{assum:assumption-A2} with $\Lambda \in \mathbb{S}^+$, a.s.
Moreover let $\hat{\nu} \in \A_{t;\FF}$ be an admissible strategy that is optimal for \eqref{eq:final-problem2}. Then 
 it holds that $\hat\nu$ is also the optimal strategy $\nu^*$ for  \eqref{OptimizationProblem} and
 \begin{align}\label{eq:final-solution}
  J^{t,x}(\nu^*)=\mathbb{E}\left[J_{\HH}^{t,x}(\hat\nu)\mid\F_t\right]\,, \qquad t\in [0,T]\,.
 \end{align}
Under the additional assumptions on $\tilde{\theta}$ as in Theorem \ref{hat_nu}, this optimal strategy $\hat{\nu}$ is given in \eqref{hat-nu} and 
the corresponding maximal  expected \textit{power} utility is given by
\begin{equation}\label{eq:sol-power}
J^{t,x}(\nu^*)= \mathbb{E}\left[\frac{x^{1-p}}{1-p}\exp\left\{\int\limits_{\Lambda_t}^{\Lambda_T}\frac{1-p}{2p} \tilde{\theta}_u^2 \, \ud u \right\}\mid\F_t \right]\,,
\end{equation}
while for the maximal \textit{logarithmic} utility we get
\begin{equation}\label{eq:sol-logarithm}
J^{t,x}(\nu^*) = \mathbb{E}\left[\left( \log(x)+\int\limits_{0}^{\Lambda_t}\tilde{\theta}_u \,\ud W_u +\frac 12\int\limits_{0}^{\Lambda_T} \tilde{\theta}^2_u\, \ud u\right) \mid \F_t \right]\,.
\end{equation}
\end{theorem}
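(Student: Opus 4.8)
The plan is to transfer the optimality already obtained under the enlarged, time-changed information flow $\hat\HH$ back to the original filtration $\FF$ by means of the tower property. The starting point is the inclusion $\F_t\subseteq\hat\Hh_t=\Hh_{\Lambda_t}$, valid for all $t\in[0,T]$ and recorded right after \eqref{TCFLambda}. From it, for every admissible $\nu\in\A_{t;\FF}$,
\[
J^{t,x}(\nu)=\mathbb{E}\big[U(V_T^{t,x}(\nu))\mid\F_t\big]=\mathbb{E}\big[\,\mathbb{E}[U(V_T^{t,x}(\nu))\mid\hat\Hh_t]\mid\F_t\big]=\mathbb{E}\big[J_\HH^{t,x}(\nu)\mid\F_t\big].
\]
Hence it suffices to prove that $\hat\nu$ maximises $J_\HH^{t,x}(\cdot)$ over $\A_{t;\FF}$: once this is known, monotonicity of $\mathbb{E}[\cdot\mid\F_t]$ together with the identity above propagates the inequality to $J^{t,x}$, which gives both $\nu^{\ast}=\hat\nu$ and \eqref{eq:final-solution}.

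To see that $\hat\nu$ is $J_\HH$-optimal, I would combine Proposition \ref{prop:problem-with-inequality} with the hypotheses of the theorem. Under Assumption \ref{assum:assumption-A2} one has $\ud(A\circ\Gamma)_u=\tilde\theta_u\,\ud u$, so Proposition \ref{prop:problem-with-inequality} gives, for every $\nu\in\A_{t;\FF}$,
\[
J_\HH^{t,x}(\nu)\le\esssup_{\tilde\nu\in\tilde\A_{\Lambda_t;\HH}}\mathbb{E}\Big[U\Big(x+\int_{\Lambda_t}^{\Lambda_T}\tilde\nu_u\,\dW_u+\int_{\Lambda_t}^{\Lambda_T}\tilde\nu_u\tilde\theta_u\,\ud u\Big)\,\Big|\,\Hh_{\Lambda_t}\Big],
\]
the right-hand side being exactly the quantity optimised in \eqref{eq:final-problem2}. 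Now apply Theorem \ref{TrafoFormula} to the $\dM$-integral and the ordinary change of variable for Lebesgue--Stieltjes integrals to the $\ud A$-integral (as in \eqref{tra-to-continous}) with $\nu=\hat\nu$: writing $\tilde\nu:=\hat\nu\circ\Gamma$ one obtains
\[
V_T^{t,x}(\hat\nu)=x+\int_{\Lambda_t}^{\Lambda_T}\tilde\nu_u\,\dW_u+\int_{\Lambda_t}^{\Lambda_T}\tilde\nu_u\tilde\theta_u\,\ud u,
\]
so that $J_\HH^{t,x}(\hat\nu)=\mathbb{E}[U(V_T^{t,x}(\hat\nu))\mid\Hh_{\Lambda_t}]$ coincides with the argument of the essential supremum above evaluated at $\tilde\nu$. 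Since $\hat\nu$ is, by hypothesis, optimal for \eqref{eq:final-problem2}, $\tilde\nu$ attains that supremum; combining this with the previous display, $J_\HH^{t,x}(\nu)\le J_\HH^{t,x}(\hat\nu)$ a.s.\ for every $\nu\in\A_{t;\FF}$, i.e.\ $\hat\nu$ solves \eqref{EnlargedOptimizationProblem}.

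Taking $\mathbb{E}[\cdot\mid\F_t]$ in this last inequality and invoking the tower-property identity of the first paragraph, $J^{t,x}(\nu)=\mathbb{E}[J_\HH^{t,x}(\nu)\mid\F_t]\le\mathbb{E}[J_\HH^{t,x}(\hat\nu)\mid\F_t]=J^{t,x}(\hat\nu)$ for every admissible $\nu$; since $\hat\nu\in\A_{t;\FF}$ this identifies $\hat\nu$ as $\nu^{\ast}$ and, passing to the essential supremum, yields \eqref{eq:final-solution}. Under the additional hypotheses of Theorem \ref{hat_nu} one may take $\hat\nu$ to be the explicit process \eqref{hat-nu} (it is admissible and optimal for \eqref{eq:final-problem2} by that theorem), and Theorem \ref{thm:another-optimal-problem} supplies the closed forms of $J_\HH^{t,x}(\hat\nu)=\mathbb{E}[U(V_{\Lambda_T}^{\Lambda_t,x}(\tilde\nu))\mid\hat\Hh_t]$ in the power and logarithmic cases; substituting these into \eqref{eq:final-solution} produces \eqref{eq:sol-power} and \eqref{eq:sol-logarithm}.

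The step I expect to be the main obstacle is the passage in the second paragraph showing that the inequality of Proposition \ref{prop:problem-with-inequality} collapses to an equality along $\hat\nu$: this requires correctly matching the integrals on $[t,T]$ (against $M$ and $A$) with those on $[\Lambda_t,\Lambda_T]$ (against $W$ and the Lebesgue measure) through the change-of-variable formulas of Theorems \ref{TrafoFormula}--\ref{TrafoFormulaBack}, and it is precisely here that the $\Lambda$-adaptedness of $\tilde\nu=\hat\nu\circ\Gamma$ and, for the explicit part, the $\mathcal{H}_0$-measurability of $\tilde\theta$ are used. Everything else --- the tower property and monotonicity of conditional expectation --- is routine bookkeeping.
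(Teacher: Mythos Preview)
Your proposal is correct and follows essentially the same route as the paper: establish the tower identity $J^{t,x}(\nu)=\mathbb{E}[J_\HH^{t,x}(\nu)\mid\F_t]$, use that $\hat\nu$ maximises $J_\HH^{t,x}$ over $\A_{t;\FF}$, and push the resulting pointwise inequality through the conditional expectation. The paper compresses your second paragraph into the single hypothesis ``$\hat\nu$ is optimal for \eqref{eq:final-problem2}'' (together with the remark after Theorem~\ref{hat_nu} that this forces equality in \eqref{final-problem2}), and phrases the monotonicity step as $\esssup_\nu\mathbb{E}[J_\HH^{t,x}(\nu)\mid\F_t]\le\mathbb{E}[\esssup_\nu J_\HH^{t,x}(\nu)\mid\F_t]$ (which it somewhat loosely calls ``Jensen's inequality''), but the logical content is the same as your direct use of monotonicity of $\mathbb{E}[\cdot\mid\F_t]$.
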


\begin{proof}
Recall the set of admissible strategies $\A_{t;\FF}$ in Definition \ref{AdmissibleStrategies}. Let  $\nu\in\A_{t,\FF}$. Then applying the tower property yields
\begin{align*}
 J^{t,x}(\nu)=\mathbb{E}\left[U(V_T^{t,x}(\nu))\mid\F_t\right]=\mathbb{E}\left[\mathbb{E}\left[U(V_T^{t;x}(\nu))\mid\Hh_{\Lambda_t}\right]\mid\F_t\right]=\mathbb{E}[J_{\HH}^{t,x}(\nu)\mid\F_t].
\end{align*}
In particular, $J^{t,x}(\hat\nu)=\mathbb{E}[J_{\HH}^{t,x}(\hat\nu)\mid\F_t]$. Using the latter together with Jensen's inequality yields
\begin{align*}
 J^{t,x}(\nu^*)=\esssup_{\nu\in\A_{t;\FF}}J^{t,x}(\nu) &= \esssup_{\nu\in\A_{t;\FF}}\mathbb{E}\left[J_{\HH}^{t,x}(\nu)\mid\F_t\right]\\
 &\leq \mathbb{E}\left[\esssup_{\nu\in\A_{t;\FF}}J_{\HH}^{t,x}(\nu)\mid\F_t\right]\\
 &=\mathbb{E}\left[J_{\HH}^{t,x}(\hat\nu)\mid\F_t\right]=J^{t,x}(\hat\nu)\,.
\end{align*}
But as $\hat\nu\in\A_{t;\FF}$, we clearly have that
\begin{align*}
 J^{t,x}(\hat\nu)\leq\esssup_{\nu\in\A_{t;\FF}}J^{t,x}(\nu)=J^{t,x}(\nu^*)
\end{align*}
and \eqref{eq:final-solution} follows. Equations \eqref{eq:sol-power} and \eqref{eq:sol-logarithm} follow from Theorem \ref{thm:another-optimal-problem} and equation \eqref{eq:final-solution}. 
\end{proof}

\par \medskip


\paragraph{Conclusion.} We have derived change of variable formulas for stochastic integrals w.r.t.\ a time-changed Brownian motion. We made use of 
these change of variable formulas to solve the problem of maximising expected utility from terminal wealth in a semimartingale setting where the semimartingale is written 
as a sum of a time-changed Brownian motion and a finite variation process. To solve our problem we needed to impose certain conditions on the finite variation process of the considered semimartingale. These conditions allowed us to obtain explicit expressions for the optimal strategy in terms of the finite-variation process where we consider the cases of power and logarithmic utilities. 

When we do not impose extra conditions on the finite-variation part of the price process, we obtained, under the enlarged filtration, an upper bound for the optimisation problem \eqref{final-problem2} and not a representation. That is we do not have equality in \eqref{final-problem2}, but inequality. 
In a future research, one can investigate whether it exists a larger filtration  where we can have a representation of the optimisation problem  with equality in \eqref{final-problem2} without the need to impose extra conditions on the finite-variation part of the price process. Another interesting study would be to investigate the change of variable formulas for stochastic integrals w.r.t.\ a time-changed Poisson or a more general L\'evy process. 

\par\bigskip
{\bf Acknowledgement.}
The research leading to these results is within the project {\it STORM: Stochastics for Time-Space Risk Models} of the Research Council of Norway (nr.\ 274410). The authors also acknowledge the FWO Scientific Research Network ModSimFIE (FWO WOG W001021N) for funding research visits to carry out this research. 


\bibliographystyle{alpha}
\bibliography{references}

\appendix
\section*{Appendix}\label{proofs}

{\bf Proof of Proposition \ref{AufteilungSigmaAlgF}.} \\
i) We have
\begin{align*}
\F^\Lambda_T\vee\F^W_{R}&=\left((\F_\Lambda\otimes\{\emptyset,\Omega_W\})\vee\N\big)\vee\big((\{\emptyset,\Omega_\Lambda\}\otimes\F_W)\vee\N\right)\\
&=\big((\F_\Lambda\otimes\{\emptyset,\Omega_W\})\vee(\{\emptyset,\Omega_\Lambda\}\otimes\F_W)\big)\vee\N\\
&=\sigma\left(\bigcap_{i=1}^nA_i, A_i\in\{A_\Lambda\times\Omega_W, A_\Lambda\in\F_\Lambda\}\cap\{\Omega_\Lambda\times B_W, B_W\in\F_W\}, n\in\IN \right)\vee\N\\
&=\sigma\big(\{A_\Lambda\times B_W, A_\Lambda\in\F_\Lambda, B_W\in\F_W\}\big)\vee\N\\
&=(\F_\Lambda\otimes\F_W)\vee\N 
 \end{align*}
ii) We have
\begin{align*}
 \F_\Lambda\otimes\{\emptyset,\Omega_W\}&=\sigma(\{A_\Lambda\times A_W, A_\Lambda\in\F_\Lambda, A_W\in\{\emptyset,\Omega_W\}\})\\
 &=\sigma(\{A_\Lambda\times\Omega_W, A_\Lambda\in\F_\Lambda\})\\
 &=\{A_\Lambda\times\Omega_W, A_\Lambda\in\F_\Lambda\},
\end{align*}
which is a sigma-algebra. 
Analogously we show $$\{\emptyset,\Omega_\Lambda\}\otimes\F_W=\{\Omega_\Lambda\times B_W, B_W\in\F_W\}\,.$$ 
Thus using \eqref{Assumption 1}, we get
\begin{align}\label{eq:sigma-algebras}
 \F^\Lambda_T&=\sigma(\{A_\Lambda\times\Omega_W, A_\Lambda\in\F_\Lambda\}\cup\N)\nonumber\\
 \F^W_{R}&=\sigma(\{\Omega_\Lambda\times B_W, B_W\in\F_W\}\cup\N).
\end{align}
The generators of the sigma-algebras in \eqref{eq:sigma-algebras} are $\pi$-systems as they are independent and closed under finite intersection. Let $A\in\{A_\Lambda\times\Omega_W, A_\Lambda\in\F_\Lambda\}\cup\N$ and $B\in\{\Omega_\Lambda\times B_W, B_W\in\F_W\}\cup\N$. If $A\in\N$ (or $B\in\N$) then $\PP(A)=0$ ($\PP(B)=0$) and independence holds. Otherwise, $A=A_\Lambda\times\Omega_W$, for some $A_\Lambda\in\F_\Lambda$ and $B=\Omega_\Lambda\times B_W$, for some $B_W\in\F_W$ and
\begin{align*}
 \PP(A\cap B)&=\PP(A_\Lambda\times B_W)=\int\limits_{A_\Lambda\times B_W}\text{d}\PP_\Lambda\otimes \PP_W=\int\limits_{B_W}\int\limits_{A_\Lambda}\,\text{d}\PP_W\,\text{d}\PP_\Lambda\\
 &=\PP_\Lambda(A_\Lambda)\PP_W(B_W)=\PP(A_\Lambda\times\Omega_W)\PP(\Omega_\Lambda\times B_W)=\PP(A)\PP(B)\,.
\end{align*}
The result now follows from \cite[Lemma 3.6]{kallenberg2006foundations}. 

\par\medskip
{\bf Proof of Proposition \ref{endowment}.} \\
Observe that $\forall t\in[0,T]$, $\Lambda_t$ is $\F^\Lambda_t$-measurable. As $\F^\Lambda_t\subseteq\F^\Lambda_T=(\F_\Lambda\otimes\{\emptyset,\Omega_W\})\vee\N\subseteq\F$\,, then 
 \begin{align*}
  \hat\Lambda_t:=(\Lambda_t,\id):\Omega\rightarrow\IR_+\times\Omega,\quad \omega\mapsto(\Lambda_t(\omega),\omega)
 \end{align*}
 is $\F-\B(\IR_+)\otimes\F$-measurable. Moreover, $W:\IR_+\times\Omega\rightarrow\IR$ is $\B(\IR_+)\otimes\F-\B(\IR)$-measurable. This implies that $M_t=W\circ\hat\Lambda_t$ is $\F-\B(\IR)$-measurable as a composition of measurable functions. Hence $\F^M_t\subseteq\F$ and 
 \begin{align*}
  \F_T=\sigma(\F^M_T\cup\F^\Lambda_T)\subseteq\F
 \end{align*}
 and the statement follows since $\F_t \subset \F_T$\,, for all $t \in [0,T]$\,.

\par\medskip
{\bf Proof of Proposition \ref{M-F-martingale}.}\\
 $(M_t)_{t\in[0,T]}$ is $\FF^M$-adapted and thus $\FF$-adapted. We define $\tilde\Lambda:[0,T]\times\Omega_\Lambda\rightarrow[0,R]$ by $\tilde\Lambda_t(\omega_\Lambda)=\Lambda_t(\omega_\Lambda)$ and $\tilde W:[0,R]\times\Omega_W\rightarrow\IR$ by $\tilde W_t(\omega_W)=W_t(\omega_W)$\,. As $\Lambda$ is bounded, it holds $\sqrt{\tilde\Lambda_T}\in L^1(\Omega_\Lambda,\F_\Lambda,\PP_\Lambda)$. Therefore, using a change of variable formula for Lebesgue-integrals and H\"older's inequality, we get
 \begin{align*}
  \EE[|M_t|]&=\int\limits_{\Omega_\Lambda}\int\limits_{\Omega_W}|\tilde W_{\tilde\Lambda_t(\omega_\Lambda)}(\omega_W)|\PP_W(\domega_W)\PP_{\Lambda_t}(\domega_\Lambda)\\
  &=\int\limits_{\tilde\Lambda_t(\Omega_\Lambda)}\int\limits_{\Omega_W}|\tilde W_{\lambda}(\omega_W)|\PP_W(\domega_W)(\PP_{\Lambda_t}\circ\tilde\Lambda_t^{-1})(\dlambda)\\
  &=\int\limits_{\tilde\Lambda_t(\Omega_\Lambda)}\EE_{\PP_W}[|\tilde W_{\lambda}|](\PP_{\Lambda_t}\circ\tilde\Lambda_t^{-1})(\dlambda)\\
  &\leq\int\limits_{\tilde\Lambda_t(\Omega_\Lambda)}\sqrt{\EE_{\PP_W}[|\tilde W_{\lambda}|^2]}\sqrt{\EE_{\PP_W}[1]}(\PP_{\Lambda_t}\circ\tilde\Lambda_t^{-1})(\dlambda)\\
  &=\int\limits_{\Omega_\Lambda}\sqrt{\tilde\Lambda_t(\omega_\Lambda)}\PP_{\Lambda_t}(\domega_\Lambda)\\
  &\leq \EE[\sqrt{\Lambda_t}]<\infty\,.
 \end{align*}
 Now let $s\leq t$. It follows from Proposition \ref{endowment} in the present paper and \cite[Proposition 6.8]{kallenberg2006foundations} that $\sigma(M_t-M_s,\F^\Lambda_T)\indep\F^M_s$\,. Hence 
 \begin{align}\label{martingale-M-lambda}
  \EE\left[M_t|\F^M_s\vee\F^\Lambda_T\right] = \EE\left[M_s+M_t-M_s|\F^M_s\vee\F^\Lambda_T\right]=M_s+\EE\left[M_t-M_s|\F^\Lambda_T\right]=M_s\,.
 \end{align}
Since $\F_t\subseteq \F^M_t\vee\F^\Lambda_T$\,, then making use of the tower property, of equation \eqref{martingale-M-lambda}, and of the $\FF$-adaptedness of $(M_t)_{t\in[0,T]}$\,, we get  
 \begin{align*}
  \EE[M_t|\F_s]=\EE\left[\EE\left[M_t|\F^M_s\vee\F^\Lambda_T\right]|\F_s\right]=\EE[M_s|\F_s]=M_s
 \end{align*}
 and the result follows.

\medskip
{\bf Proof of Proposition \ref{Hcont}.}\\
{\it Left-continuity.}
 Recall that for any sets $\mathcal{E}_1$, $\mathcal{E}_2$ of subsets it holds: $[\mathcal E_1\subseteq\mathcal E_2\Rightarrow\sigma(\mathcal E_1)\subseteq\sigma(\mathcal E_2)]$. Therefore,
 \begin{align*}
  \sigma\left(\bigcup_{s<t}\F^W_s\right)\subseteq\sigma\left(\bigcup_{s<t}\F^W_s\cup\F^\Lambda_T\right)\text{ and }\F^\Lambda_T\subseteq\sigma\left(\bigcup_{s<t}\F^W_s\cup\F^\Lambda_T\right),
 \end{align*}
 and thus $\sigma\left(\bigcup_{s<t}\F^W_s\right)\cup\F^\Lambda_T\subseteq\sigma\left(\bigcup_{s<t}\F^W_s\cup\F^\Lambda_T\right)$. Now recall that for an arbitrary set $\mathcal{E}$ of subsets and a sigma-algebra $\A$ it holds: $[\mathcal E\subseteq\A\Rightarrow\sigma(\mathcal E)\subseteq\A]$. This, together with the left-continuity of $\FF^W$ implies that
 \begin{align*}
  \Hh_t=\sigma(\F^W_t\cup\F^\Lambda_T)=\sigma\left(\sigma\left(\bigcup_{s<t}\F^W_s\right)\cup\F^\Lambda_T\right)\subseteq\sigma\left(\bigcup_{s<t}\F^W_s\cup\F^\Lambda_T\right)=\bigvee_{s<t}\Hh_s=\Hh_{t-}.
 \end{align*}
 As the inclusion $\Hh_{t-}\subseteq\Hh_t$ is clear, we have that $\HH$ is indeed left-continuous.
 
 {\it Right-continuity.} It follows from \cite[Theorem 1]{wu1982property}. 
 
 {\it Completeness.} It is easy to see that
  
 $$\Hh_t=\F^W_t\vee\F^\Lambda_T=\left(\sigma(W(s), \,s\leq t)\vee\N\right)\vee\F^\Lambda_T=\left(\sigma(W(s), \,s\leq t)\vee\F^\Lambda_T\right)\vee\N\,$$
and the statement is proved.

\par\medskip
{\bf Proof of Proposition \ref{W-BM}.}\\
We know that $W$ \eqref{W-Lambda} is a Brownian motion. In order for $W$ to be an $\HH$-Brownian motion, by \cite[Chapter III, Definition 2.20]{revuz2013continuous} we have to show that for all $s\leq t\leq R$\,, $\sigma(W_t-W_s)\indep\Hh_s$. Let $A\in\Hh_s$, $B\in\B(\IR)$. Then we have
\begin{align*}
 \PP(W_t-W_s\in B,A)&=\EE\left[\EE\left[\ind_{\{W_t-W_s\in B\}}\ind_A|\Hh_s\right]\right]\\
 &=\EE\left[\EE\left[\ind_{\{W_t-W_s\in B\}}|\F^W_s\vee\F^\Lambda_T\right]\ind_A\right].
\end{align*}
Since $\sigma(W_t-W_s,\F^W_s)\subset\F^W_t$ and $\F^W_t\indep\F^\Lambda_T$, for all $t \in [0,T]$\,, then we have
\begin{align*}
 \EE\left[\ind_{\{W_t-W_s\in B\}}|\F^W_s\vee\F^\Lambda_T\right]=\EE\left[\ind_{\{W_t-W_s\in B\}}|\F^W_s\right]=\EE\left[\ind_{\{W_t-W_s\in B\}}\right]=\PP(W_t-W_s\in B)\,.
\end{align*}
Thus
\begin{align*}
 \PP(W_t-W_s\in B,A)=\PP(W_t-W_s\in B)\PP(A)
\end{align*}
and the statement follows.

\end{document}